\theoremstyle{definition}
\newtheorem*{defn*}{\protect\definitionname}
\theoremstyle{plain}
\newtheorem{thm}{\protect\theoremname}[section]
\theoremstyle{plain}
\newtheorem{prop}[thm]{\protect\propositionname}
\theoremstyle{plain}
\newtheorem{cor}[thm]{\protect\corollaryname}
\theoremstyle{remark}
\newtheorem{rem}[thm]{\protect\remarkname}
\theoremstyle{definition}
\newtheorem{example}[thm]{\protect\examplename}
\newcommand{\R}{\mathbb{R}}
\newcommand{\N}{\mathbb{N}}
\newcommand{\dom}{\operatorname{dom}}
\newcommand{\ran}{\operatorname{ran}}
\renewcommand{\d}{\,\mathrm{d}}
\newcommand{\grad}{\operatorname{grad}}
\newcommand{\curl}{\operatorname{curl}}
\newcommand{\dive}{\operatorname{div}}
\renewcommand{\tilde}{\widetilde}
\newcommand{\Span}{\operatorname{span}}
\newcommand{\red}{\mathrm{red}}
\DeclareMathAccent{\Circ}{\mathalpha}{operators}{"17}
\theoremstyle{definition}
\providecommand{\corollaryname}{Corollary}
\providecommand{\definitionname}{Definition}
\providecommand{\examplename}{Example}
\providecommand{\propositionname}{Proposition}
\providecommand{\remarkname}{Remark}
\providecommand{\theoremname}{Theorem}
\begin{document}
\title{A Note on Some Non-Local Boundary Conditions and their Use in Connection
with Beltrami Fields }
\author{Rainer Picard\thanks{Institut für Analysis, TU Dresden, Dresden, Germany, rainer.picard@tu-dresden.de}
and Sascha Trostorff\thanks{Mathematisches Seminar, CAU Kiel, Kiel, Germany, trostorff@math.uni-kiel.de}}
\maketitle
\begin{abstract}
\textbf{Abstract. }We consider two operators $A_{0},B_{0}$ between
two Hilbert spaces satisfying $A_{0}\subseteq-B_{0}^{\ast}$ and $B_{0}\subseteq-A_{0}^{\ast}$
and inspect extensions $A^{\#}$ and $B^{\#}$ of $A_{0}$ and $B_{0}$,
respectively, whose domain consists of those elements satisfying an
abstract periodic boundary condition. The motivating example is the
derivative on some interval, where the so-defined realisation gives
the classical derivative with periodic boundary conditions. We derive
necessary and sufficient conditions for the operator equality $A^{\#}=-\left(B^{\#}\right)^{\ast}$
and illustrate our findings by applications to the classical vector
analytic operators $\grad,\dive$ and $\curl$. In particular, the
realisation $\curl^{\#}$ naturally arises in the study of so-called
Beltrami fields. 
\end{abstract}
\tableofcontents{}

\section{Introduction}

A typical non-local boundary condition we have in mind is the condition
of periodicity -- say -- on the unit interval $\left]-1/2,1/2\right[$
for the standard 1-dimensional derivative $\partial$. In an $L_{2}\left(\left]-1/2,1/2\right[\right)$-setting
this condition can conveniently be described (see Example \ref{exa:periodicity-boundary-conditions})
by
\begin{equation}
\partial u\perp1,\label{eq:period0}
\end{equation}
resulting in a skew-selfadjoint operator $\partial^{\#}$. If we denote
by $\partial_{0}$ the derivative with vanishing boundary data, we
have 
\[
\partial=-\partial_{0}^{*}.
\]
We clearly have the orthogonal decomposition
\begin{align*}
L_{2}\left(\left]-1/2,1/2\right[\,,\mathbb{R}\right) & =\overline{\ran}\left(\partial_{0}\right)\oplus\ker\left(\partial\right),\\
 & =\overline{\ran}\left(\partial_{0}\right)\oplus\mathbb{R},
\end{align*}
so that the non-local boundary condition \eqref{eq:period0} defining
$\partial^{\#}$ can be rephrased as
\begin{equation}
\partial u\in\overline{\ran}\left(\partial_{0}\right).\label{eq:period1}
\end{equation}
Remarkably this is essentially the same situation as in the case of
the vector-analytical operator $\curl$ with boundary conditions associated
with the topic of force-free magnetic field, the eigensolutions of
which are frequently referred to as Beltrami fields. In classical
terms this boundary condition is $n\cdot\curl H=0$ on the boundary
of a domain $\Omega$, $n$ denoting the unit normal vector field.
In an $L_{2}\left(\Omega;\mathbb{R}^{3}\right)$-setting this boundary
condition can be formulated as an orthogonality condition 
\[
\curl H\perp\ran\left(\grad\right).
\]
In topologically more complex domains one needs to require additional
constraints to obtain a reasonably small spectrum, which leads to
the analogue to \eqref{eq:period1}
\[
\curl H\in\overline{\ran}\left(\curl_{0}\right).
\]
The resulting operator $\curl^{\#}$ has been extensively studied,
see the discussion in \cite{Picard1998,Picard1998_exterior}. After
a long pre-history based on assumptions warranting that $\ran\left(\curl_{0}\right)$
is actually closed, N. Filonov, \cite{Filonov1999}, has found, based
on potential theoretical considerations, that it suffices to assume
that $\Omega$ is merely of finite measure. This is in sharp contrast
to the situation of $\ran\left(\curl_{0}\right)$ closed, for which
at least some boundary regularity appears to be required.

Filonov's result suggests that there actually might be an abstract
functional analytical mechanism in the background. In this paper we
shall indeed present such an approach, which allows to cover the case
of bounded domains. Exterior domains clearly require adjustments and
it appears that to control the asymptotics of unbounded domains of
finite measure more concrete methods, such as potential theory, are
required.

In the following we shall develop an abstract setting covering these
examples and expanding the reach of the concept to a larger class
of applications. In Section \ref{sec:Preliminaries} we first collect
some basic facts useful for our framework. With the periodic boundary
condition case as a root example in mind, we shall speak of `abstract
periodicity', which we introduce in Section \ref{sec:Abstract-Periodicity}.
In Section \ref{sec:Conditions-for-a} we deal with the closed range
property of $A^{\#}$, which is a crucial ingredient of linear solution
theory. The last section, Section \ref{sec:Applications}, is dedicated
to some illustrative examples.

\section{Preliminaries\label{sec:Preliminaries}}

In this section we collect some probably well-known results, which
will be useful in further sections of this note (see e.g. \cite[Lemmas 4.1, 4.3]{Pauly2019}).
For the readers convenience we include the proofs. Throughout, let
$H_{0},H_{1}$ be Hilbert spaces and $A\colon\dom(A)\subseteq H_{0}\to H_{1}$
a closed, densely defined, linear operator. We begin to introduce
the reduced operator for $A$.
\begin{defn*}
We define the \emph{reduced operator} $A_{\red}$ by 
\[
A_{\red}\colon\dom(A)\cap\ker(A)^{\bot}\subseteq\ker(A)^{\bot}\to\overline{\ran}(A),\quad x\mapsto Ax.
\]

Moreover, we set $D_{A_{\red}}\coloneqq\dom(A)\cap\ker(A)^{\bot}$
and equip it with the graph norm of $A_{\red}$. 
\end{defn*}
Clearly, $A_{\red}$ is one-to-one and has dense range. Moreover,
$A_{\red}$ is closed and hence, $D_{A_{\red}}$ is complete.
\begin{prop}
\label{prop:reduced_op}We consider $A$ and the reduced operator
$A_{\red}.$ Then

\begin{enumerate}[(a)]

\item $\ran(A)$ is closed if and only if $A_{\red}$ is boundedly
invertible.

\item $D_{A_{\red}}\hookrightarrow\hookrightarrow H_{0}$ if and
only if $A_{\red}$ is compactly invertible. 

\end{enumerate}
\end{prop}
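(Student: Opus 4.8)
The plan is to exploit that $A_{\red}$ is a closed, injective operator with dense range whose range coincides with that of $A$. Writing any $x\in\dom(A)$ as $x=x_{0}+x_{1}$ with $x_{0}\in\ker(A)$ and $x_{1}\in\ker(A)^{\perp}$, one has $Ax=Ax_{1}=A_{\red}x_{1}$, so that $\ran(A)=\ran(A_{\red})$. Thus statements about $\ran(A)$ translate directly into statements about the bijectivity of $A_{\red}$ onto $\overline{\ran}(A)$, and I would record this identity first.

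For (a), I would argue as follows. If $\ran(A)$ is closed, then $\ran(A_{\red})=\ran(A)=\overline{\ran}(A)$, so the closed operator $A_{\red}$ is a bijection from $D_{A_{\red}}\subseteq\ker(A)^{\perp}$ onto the Hilbert space $\overline{\ran}(A)$. Its inverse is then everywhere defined and closed, hence bounded by the closed graph theorem; that is, $A_{\red}$ is boundedly invertible. Conversely, if $A_{\red}$ is boundedly invertible, it is in particular surjective, whence $\ran(A)=\ran(A_{\red})=\overline{\ran}(A)$ is closed.

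For (b), the forward direction is straightforward: if $A_{\red}^{-1}$ is compact and $(x_{n})_{n}$ is bounded in $D_{A_{\red}}$, then $(A_{\red}x_{n})_{n}$ is bounded in $\overline{\ran}(A)$, so $(x_{n})_{n}=(A_{\red}^{-1}A_{\red}x_{n})_{n}$ has a subsequence converging in $H_{0}$, giving the compact embedding. For the converse I would first recover bounded invertibility (hence, by (a), closedness of $\ran(A)$) from a Poincar\'e-type estimate. Assuming the embedding is compact, I would show there is $c>0$ with $\|A_{\red}x\|\geq c\|x\|$ for all $x\in D_{A_{\red}}$: if not, pick $x_{n}$ with $\|x_{n}\|=1$ and $\|A_{\red}x_{n}\|\to0$; these are bounded in the graph norm, so a subsequence converges in $H_{0}$ to some $x$, and closedness of $A_{\red}$ together with $A_{\red}x_{n}\to0$ forces $x\in D_{A_{\red}}$ and $A_{\red}x=0$, whence $x=0$ by injectivity, contradicting $\|x\|=1$. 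This estimate yields bounded invertibility of $A_{\red}$. Finally, for a bounded sequence $(y_{n})_{n}$ in $\overline{\ran}(A)$ the preimages $x_{n}=A_{\red}^{-1}y_{n}$ are bounded in the graph norm, so compactness of the embedding produces a convergent subsequence, showing $A_{\red}^{-1}$ is compact.

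The routine ingredients are the closed graph theorem and the decomposition $\ran(A)=\ran(A_{\red})$; the step I expect to be the crux is the converse of (b), namely deducing the coercivity estimate (and thus bounded invertibility) purely from the compact embedding, where the combination of closedness and injectivity of $A_{\red}$ in the contradiction argument must be used carefully.
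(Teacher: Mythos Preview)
Your proposal is correct and follows essentially the same approach as the paper: the decomposition $\ran(A)=\ran(A_{\red})$, the closed graph theorem for (a), and for (b) the same contradiction argument to obtain the Poincar\'e-type estimate, followed by composing the bounded inverse with the compact embedding (you phrase this sequentially, the paper writes $A_{\red}^{-1}=A_{\red}^{-1}\circ A_{\red}$ as an operator factorisation, but this is the same). The only cosmetic difference is that you label the implication ``compact inverse $\Rightarrow$ compact embedding'' as the forward direction, whereas as stated it is the converse; this does not affect the mathematics.
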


\begin{proof}
(a): If $\ran(A)$ is closed, then $A_{\red}$ is onto and hence,
bijective. Moreover, since $A_{\red}$ is closed, the bounded invertibility
is a consequence of the closed graph theorem. If, on the other hand,
$A_{\red}$ is boundedly invertible, then it is onto, which yields
$\overline{\ran}(A)=\ran(A_{\red})=\ran(A),$ hence $A$ has closed
range.\\
(b): Assume that $D_{A_{\red}}\hookrightarrow\hookrightarrow H_{0}$.
Since $D_{A_{\red}}\subseteq\ker(A)^{\bot}$ and $\ker(A)^{\bot}$
is closed in $H_{0}$, we infer that $D_{A_{\red}}\hookrightarrow\hookrightarrow\ker(A)^{\bot}.$
We claim that $A_{\red}$ is boundedly invertible. Indeed, if $A_{\red}$
is not boundedly invertible, we find a sequence $(x_{n})_{n\in\N}$
in $\dom(A_{\red})$ such that $A_{\red}x_{n}\to0$ and $\|x_{n}\|_{H_{0}}=1$
for each $n\in\N.$ The latter gives that $(x_{n})_{n}$ is a bounded
sequence in $D_{A_{\red}}$ and thus, it possesses a convergent sub-sequence.
So, we assume without loss of generality that $x_{n}\to x$ for some
$x\in\ker(A)^{\bot}.$ By the closedness of $A_{\red}$, we obtain
$x\in\dom(A_{\red})$ and $A_{\red}x=0.$ Thus, $x=0,$ which contradicts
$\|x\|=\lim_{n\to\infty}\|x_{n}\|=1.$ Thus, $A_{\red}$ is boundedly
invertible. Finally, $A_{\red}^{-1}:\ran(A)\to D_{A_{\red}}$ is bounded
and hence, $A_{\red}^{-1}\colon\ran(A)\to\ker(A)^{\bot}$ is compact.
\\
Conversely, assume that $A_{\red}$ is compactly invertible and set
$\iota:D_{A_{\red}}\to\ker(A)^{\bot}$ by $\iota x=x$. Consider $A_{\red}\colon D_{A_{\red}}\to\ran(A)$,
which is a bounded operator and hence, $\iota=A_{\red}^{-1}A_{\red}\colon D_{A_{\red}}\to\ker(A)^{\bot}$
is compact, which in turn is equivalent to $D_{A_{\red}}\hookrightarrow\hookrightarrow H_{0}.$ 
\end{proof}
The following proposition can also be found in \cite[Lemma 2.4]{Trostorff2014}.
\begin{prop}
\label{prop:adjoint_reduced}We have 
\[
(A_{\red})^{\ast}=\left(A^{\ast}\right)_{\red}.
\]
\end{prop}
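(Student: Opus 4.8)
The plan is to prove the asserted operator equality by first checking that both sides are operators between the same pair of Hilbert spaces and then establishing the two inclusions separately. First I would record the standard adjoint identities: since $A$ is closed and densely defined we have $\ker(A^{\ast})=\ran(A)^{\bot}$ and $\ker(A)=\ran(A^{\ast})^{\bot}$, whence $\ker(A)^{\bot}=\overline{\ran}(A^{\ast})$ and $\ker(A^{\ast})^{\bot}=\overline{\ran}(A)$. Consequently $A_{\red}$ is an operator from $\ker(A)^{\bot}$ into $\overline{\ran}(A)=\ker(A^{\ast})^{\bot}$, so its Hilbert space adjoint $(A_{\red})^{\ast}$ maps $\overline{\ran}(A)$ back into $\ker(A)^{\bot}$; on the other hand $(A^{\ast})_{\red}$ maps $\ker(A^{\ast})^{\bot}=\overline{\ran}(A)$ into $\overline{\ran}(A^{\ast})=\ker(A)^{\bot}$. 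Thus both operators act between the same spaces and the stated equality is meaningful. Note that on these subspaces the inner products are merely the restrictions of those on $H_{0}$ and $H_{1}$.

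For the inclusion $(A^{\ast})_{\red}\subseteq(A_{\red})^{\ast}$ I would argue directly. Let $y\in\dom((A^{\ast})_{\red})=\dom(A^{\ast})\cap\overline{\ran}(A)$. Then for every $x\in\dom(A_{\red})=\dom(A)\cap\ker(A)^{\bot}$ one has $\langle A_{\red}x,y\rangle_{H_{1}}=\langle Ax,y\rangle_{H_{1}}=\langle x,A^{\ast}y\rangle_{H_{0}}$, and since $A^{\ast}y\in\ran(A^{\ast})\subseteq\ker(A)^{\bot}$ this exhibits $y\in\dom((A_{\red})^{\ast})$ with $(A_{\red})^{\ast}y=A^{\ast}y=(A^{\ast})_{\red}y$.

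The main work is the reverse inclusion $(A_{\red})^{\ast}\subseteq(A^{\ast})_{\red}$, and this is where I expect the crux to lie. Let $y\in\dom((A_{\red})^{\ast})\subseteq\overline{\ran}(A)$ with $(A_{\red})^{\ast}y=z\in\ker(A)^{\bot}$, so that $\langle Ax,y\rangle_{H_{1}}=\langle x,z\rangle_{H_{0}}$ for all $x\in\dom(A_{\red})$. To identify $y$ as an element of $\dom(A^{\ast})$ I must upgrade this identity from $\dom(A_{\red})$ to all of $\dom(A)$. The key step is to let $P$ denote the orthogonal projection of $H_{0}$ onto $\ker(A)^{\bot}$ and, for arbitrary $x\in\dom(A)$, to write $x=Px+(1-P)x$. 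Since $(1-P)x\in\ker(A)\subseteq\dom(A)$, we get $Px\in\dom(A)\cap\ker(A)^{\bot}=\dom(A_{\red})$ and $A(1-P)x=0$, so that $\langle Ax,y\rangle_{H_{1}}=\langle APx,y\rangle_{H_{1}}=\langle Px,z\rangle_{H_{0}}=\langle x,Pz\rangle_{H_{0}}=\langle x,z\rangle_{H_{0}}$, using $Pz=z$ because $z\in\ker(A)^{\bot}$. Hence $y\in\dom(A^{\ast})$ with $A^{\ast}y=z$; combined with $y\in\overline{\ran}(A)=\ker(A^{\ast})^{\bot}$ this yields $y\in\dom((A^{\ast})_{\red})$ and $(A^{\ast})_{\red}y=z=(A_{\red})^{\ast}y$, completing the proof. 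The only subtlety worth double-checking is that the decomposition respects the domain, i.e.\ that $Px\in\dom(A)$, which is exactly where the inclusion $\ker(A)\subseteq\dom(A)$ is used.
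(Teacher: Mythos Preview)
Your proof is correct and follows essentially the same approach as the paper: both directions are established by the same direct computations, and the key step---extending the adjoint identity from $\dom(A_{\red})$ to all of $\dom(A)$ via the orthogonal projection $P$ onto $\ker(A)^{\bot}$ and the decomposition $x=Px+(1-P)x$---is exactly the argument the paper uses. Your additional remarks (verifying that both sides act between the same subspaces, and flagging where $\ker(A)\subseteq\dom(A)$ is needed) only make the presentation more explicit.
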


\begin{proof}
Let $y\in\dom\left(A_{\red}^{\ast}\right).$ Then for each $x\in\dom(A_{\red})=\dom(A)\cap\ker(A)^{\bot}$
we have 
\[
\langle A_{\red}x,y\rangle=\langle x,\left(A_{\red}\right)^{\ast}y\rangle.
\]
Denote by $P:H_{0}\to H_{0}$ the projection onto $\ker(A)^{\bot}=\overline{\ran}(A^{\ast}).$
Then we obtain for each $x\in\dom(A)$
\[
\langle Ax,y\rangle=\langle A_{\red}Px,y\rangle=\langle Px,(A_{\red})^{\ast}y\rangle=\langle x,(A_{\red})^{\ast}y\rangle,
\]
where we have used $(A_{\red})^{\ast}y\in\ker(A)^{\bot}$ in the last
equality. Hence, $y\in\dom(A^{\ast})$ with $A^{\ast}y=(A_{\red})^{\ast}y.$
Since $y\in\overline{\ran}(A)=\ker(A^{\ast})^{\bot}$ by definition,
we infer that $y\in\dom\left(\left(A^{\ast}\right)_{\red}\right)$
and thus, $\left(A^{\ast}\right)_{\red}y=A^{\ast}y=(A_{\red})^{\ast}y,$
which shows $(A_{\red})^{\ast}\subseteq(A^{\ast})_{\red}.$ For the
reverse inclusion, let $y\in\dom((A^{\ast})_{\red});$ i.e. $y\in\dom(A^{\ast})\cap\ker(A^{\ast})^{\bot}.$
Then for each $x\in\dom(A_{\red})=\dom(A)\cap\ker(A)^{\bot}$ we compute
\[
\langle A_{\red}x,y\rangle=\langle Ax,y\rangle=\langle x,A^{\ast}y\rangle,
\]
and since $A^{\ast}y\in\ran(A^{\ast})\subseteq\ker(A)^{\bot},$ we
infer $y\in\dom((A_{\red})^{\ast}),$ which shows the assertion.
\end{proof}
\begin{cor}
\label{cor:closed_adjoint}\begin{enumerate}[(a)]

\item $\ran(A)$ is closed if and only if $\ran(A^{\ast})$ is closed.

\item $D_{A_{\red}}\hookrightarrow\hookrightarrow H_{0}$ if and
only if $D_{A_{\red}^{\ast}}\hookrightarrow\hookrightarrow H_{1}.$

\end{enumerate}
\end{cor}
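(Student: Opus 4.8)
The plan is to reduce both statements to the previous two propositions, exploiting the standard principle that bounded and compact invertibility are preserved under passage to the adjoint. First I would record that since $A$ is closed and densely defined, so is $A^{\ast}$ (indeed $A^{\ast\ast}=A$, so $A^{\ast}$ is densely defined, and adjoints are automatically closed); hence \prettyref{prop:reduced_op} applies verbatim to $A^{\ast}$, whose reduced operator is $(A^{\ast})_{\red}$ acting from $\ker(A^{\ast})^{\bot}=\overline{\ran}(A)$ to $\overline{\ran}(A^{\ast})=\ker(A)^{\bot}$.

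For part (a), I would chain equivalences. By \prettyref{prop:reduced_op}(a), $\ran(A)$ is closed if and only if $A_{\red}$ is boundedly invertible. The key step is the observation that a closed, densely defined operator $T$ is boundedly invertible if and only if $T^{\ast}$ is, in which case $(T^{\ast})^{-1}=(T^{-1})^{\ast}$. Applying this with $T=A_{\red}$ and using \prettyref{prop:adjoint_reduced}, i.e. $(A_{\red})^{\ast}=(A^{\ast})_{\red}$, shows that $A_{\red}$ is boundedly invertible if and only if $(A^{\ast})_{\red}$ is. A final application of \prettyref{prop:reduced_op}(a), now to $A^{\ast}$, identifies the latter with the closedness of $\ran(A^{\ast})$, completing the equivalence.

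For part (b) I would run the identical chain, replacing ``boundedly'' by ``compactly'' throughout and invoking \prettyref{prop:reduced_op}(b). Concretely, if $A_{\red}$ is compactly invertible then it is in particular boundedly invertible, so by the fact recalled above $(A_{\red}^{\ast})^{-1}=\big((A_{\red})^{-1}\big)^{\ast}$ exists and is bounded; moreover it is compact, since the adjoint of a compact operator is compact (Schauder's theorem). The converse direction is symmetric. Combining this with \prettyref{prop:adjoint_reduced} and \prettyref{prop:reduced_op}(b) applied to $A^{\ast}$ yields that $D_{A_{\red}}\hookrightarrow\hookrightarrow H_{0}$ if and only if $D_{A_{\red}^{\ast}}\hookrightarrow\hookrightarrow H_{1}$.

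The only genuine input beyond the two cited propositions is the self-duality of bounded and compact invertibility, and this is where I expect the (entirely routine) care to be needed: one must check that the inverse of the adjoint equals the adjoint of the inverse on the correct spaces, namely as a map $\overline{\ran}(A)\to\ker(A)^{\bot}$, which is exactly the domain and codomain of $(A^{\ast})_{\red}$ so that \prettyref{prop:adjoint_reduced} may be applied without any mismatch.
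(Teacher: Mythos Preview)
Your argument is correct and is exactly the approach taken in the paper: the corollary is deduced directly from \prettyref{prop:reduced_op} and \prettyref{prop:adjoint_reduced} together with the fact that the adjoint of a compact (respectively boundedly invertible) operator is again compact (respectively boundedly invertible). The paper compresses this into a single sentence, whereas you have spelled out the chain of equivalences and the domain/codomain bookkeeping in detail, but there is no substantive difference.
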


\begin{proof}
This is a direct consequence of \prettyref{prop:reduced_op} and \prettyref{prop:adjoint_reduced}
and the fact, that the adjoint of a compact operator is again compact.
\end{proof}
We now focus on the case when $A$ is selfadjoint. Note that then
$H_{0}=H_{1}$ and $\overline{\ran}(A)=\overline{\ran}(A^{\ast})=\ker(A)^{\bot}.$
Hence, $A$ and $A_{\red}$ are operators operators acting on one
Hilbert space.
\begin{prop}
\label{prop:spectrum_reduced}Let $A$ be selfadjoint. Then 
\[
\sigma(A)\cup\{0\}=\sigma(A_{\red})\cup\{0\}.
\]
\end{prop}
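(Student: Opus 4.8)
The plan is to exploit the fact that, $A$ being selfadjoint, its kernel reduces it, so that $A$ splits as a block-diagonal operator along the orthogonal decomposition $H_0=\ker(A)\oplus\ker(A)^{\bot}$. Once this is established, the two spectra can differ only at the single point $0$, which is exactly what the statement asserts. I would first record the decomposition. Since $A$ is selfadjoint, $\ker(A)^{\bot}=\overline{\ran}(A)$ (as already noted above). Because $\ker(A)\subseteq\dom(A)$, every $x\in\dom(A)$ splits as $x=x_{0}+x_{1}$ with $x_{0}\in\ker(A)$ and $x_{1}=x-x_{0}\in\dom(A)\cap\ker(A)^{\bot}=\dom(A_{\red})$; hence $\dom(A)$ decomposes orthogonally along the two subspaces. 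Moreover $Ax=Ax_{1}=A_{\red}x_{1}\in\ran(A)\subseteq\ker(A)^{\bot}$, while $A$ acts as $0$ on $\ker(A)$. Thus, with respect to $H_0=\ker(A)\oplus\ker(A)^{\bot}$,
\[
A=0\oplus A_{\red},
\]
where $0$ denotes the zero operator on $\ker(A)$.

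Next I would analyse $\lambda-A$ for $\lambda\neq0$ using this block structure: $\lambda-A$ acts as $\lambda\,\id_{\ker(A)}$ on $\ker(A)$ and as $\lambda-A_{\red}$ on $\ker(A)^{\bot}$. Since $\lambda\neq0$, the first block is boundedly invertible, so $\lambda-A$ is boundedly invertible on $H_0$ if and only if $\lambda-A_{\red}$ is boundedly invertible on $\ker(A)^{\bot}$. Consequently $\lambda\in\rho(A)$ if and only if $\lambda\in\rho(A_{\red})$, i.e. $\sigma(A)\setminus\{0\}=\sigma(A_{\red})\setminus\{0\}$. Adjoining $0$ to both sides then yields $\sigma(A)\cup\{0\}=\sigma(A_{\red})\cup\{0\}$. (If $\ker(A)=\{0\}$ the decomposition is trivial and $A=A_{\red}$, so the identity holds a fortiori.)

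The only delicate point is the behaviour at $\lambda=0$, and this is precisely why the $\{0\}$ cannot be dropped: depending on whether $\ker(A)$ is trivial and whether $\ran(A)$ is closed, the value $0$ may lie in $\sigma(A)$ but not in $\sigma(A_{\red})$ (by \prettyref{prop:reduced_op}(a), $0\in\rho(A_{\red})$ exactly when $\ran(A)$ is closed), or the reverse. Unioning with $\{0\}$ absorbs all these cases at once. The remaining work is just the routine verification that the block-diagonal inversion is legitimate at the level of domains, which the inclusion $\ker(A)\subseteq\dom(A)$ guarantees.
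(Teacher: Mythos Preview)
Your proof is correct and follows essentially the same route as the paper: the paper also shows, for $\lambda\neq0$, that $\lambda\in\rho(A)\Leftrightarrow\lambda\in\rho(A_{\red})$ by splitting along $H_0=\ker(A)\oplus\ker(A)^{\bot}$, only it writes out the block-diagonal inverse explicitly (setting $x_0=(\lambda-A_{\red})^{-1}Pf$ and $x_1=\tfrac{1}{\lambda}(1-P)f$) rather than invoking the decomposition $A=0\oplus A_{\red}$ abstractly. Your presentation is slightly more conceptual, but the underlying argument is identical.
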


\begin{proof}
Let $\lambda\in\rho(A_{\red})$ with $\lambda\ne0.$ We show that
$\lambda\in\rho(A)$; that is, we show the bijectivity of $\lambda-A.$
First, $\lambda-A$ is one-to-one, since $\lambda x=Ax$ for $x\in\dom(A)$
implies $x\in\ker(A),$ since otherwise $\lambda x=A_{\red}x$. However,
if $x\in\ker(A)$ then $\lambda x=Ax=0$ and since $\lambda\ne0,$
we infer $x=0.$ Let now $y\in H_{1}$ and denote by $P\colon H_{0}\to H_{0}$
the projector onto $\ker(A)^{\bot}=\overline{\ran}(A).$ We set $x_{0}\coloneqq(\lambda-A_{\red})^{-1}Pf$
and $x_{1}=\frac{1}{\lambda}(1-P)f.$ Then $x_{0}\in\dom(A_{\red})=\dom(A)\cap\ker(A)^{\bot}$and
$x_{1}\in\ker(A)\subseteq\dom(A)$. Setting $x\coloneqq x_{0}+x_{1}\in\dom(A)$
we obtain 
\[
(\lambda-A)x=(\lambda-A)x_{0}+\lambda x_{1}=Pf+(1-P)f=f,
\]
and hence $\lambda-A$ is onto. \\
Assume now conversely that $\lambda\in\rho(A)$ with $\lambda\ne0.$
Then clearly, $\lambda-A_{\red}$ is one-to-one and for $f\in\overline{\ran}(A)$
we set 
\[
x\coloneqq(\lambda-A)^{-1}f.
\]
Then 
\[
\lambda x=Ax+f\in\overline{\ran}(A)=\ker(A)^{\bot}
\]
 and since $\lambda\ne0,$ we infer $x\in\ker(A)^{\bot}.$ Hence,
$x\in\dom(A_{\red})$ and clearly, $(\lambda-A_{\red})x=f,$ which
proves $\lambda\in\rho(A_{\red}).$ 
\end{proof}
\begin{cor}
\label{cor:isolated_value}Assume that $A$ is selfadjoint. Then $\ran(A)$
is closed if and only if there exists $r>0$ such that $B(0,r)\cap\sigma(A)\subseteq\{0\}.$
\end{cor}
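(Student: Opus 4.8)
The plan is to reduce the statement to the reduced operator $A_{\red}$ and then combine \prettyref{prop:reduced_op} with \prettyref{prop:spectrum_reduced}. First I would record that, since $A=A^{\ast}$, \prettyref{prop:adjoint_reduced} gives $(A_{\red})^{\ast}=(A^{\ast})_{\red}=A_{\red}$, so that $A_{\red}$ is itself a selfadjoint operator on $\ker(A)^{\bot}=\overline{\ran}(A)$; moreover it is injective by construction. By \prettyref{prop:reduced_op}\,(a) the closedness of $\ran(A)$ is equivalent to the bounded invertibility of $A_{\red}$, that is, to $0\in\rho(A_{\red})$. As the spectrum of the selfadjoint operator $A_{\red}$ is closed, this is in turn equivalent to the existence of some $r>0$ with $B(0,r)\cap\sigma(A_{\red})=\emptyset$.

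It then remains to translate this gap condition for $\sigma(A_{\red})$ into the stated gap condition for $\sigma(A)$. By \prettyref{prop:spectrum_reduced} the two spectra agree away from $0$, i.e.\ $\sigma(A)\setminus\{0\}=\sigma(A_{\red})\setminus\{0\}$. For the direction ``$\Rightarrow$'', starting from $B(0,r)\cap\sigma(A_{\red})=\emptyset$ I would intersect with $\R\setminus\{0\}$ and use this identity to conclude $B(0,r)\cap\sigma(A)\subseteq\{0\}$, which is exactly the asserted condition.

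For the converse the only subtlety — and the main point of the argument — is that the hypothesis $B(0,r)\cap\sigma(A)\subseteq\{0\}$ tolerates $0\in\sigma(A)$, whereas bounded invertibility of $A_{\red}$ demands $0\notin\sigma(A_{\red})$. Passing through the identity of the spectra off $0$ yields $B(0,r)\cap\sigma(A_{\red})\subseteq\{0\}$, so $0$ is at worst an isolated point of $\sigma(A_{\red})$. Here I would invoke the standard spectral-theoretic fact that an isolated point of the spectrum of a selfadjoint operator is necessarily an eigenvalue; since $A_{\red}$ is injective, $0$ cannot be an eigenvalue, and hence $0\notin\sigma(A_{\red})$. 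Equivalently, one argues via the spectral measure $E$ of $A_{\red}$: injectivity gives $E(\{0\})=0$, while $B(0,r)\setminus\{0\}$ is disjoint from the spectrum so that $E(B(0,r)\setminus\{0\})=0$; thus $E(B(0,r))=0$, the functional calculus yields $\|A_{\red}x\|\ge r\|x\|$, and this lower bound together with the dense range of $A_{\red}$ forces bounded invertibility. Either way $0\in\rho(A_{\red})$, so $A_{\red}$ is boundedly invertible and $\ran(A)$ is closed by \prettyref{prop:reduced_op}\,(a). The expected main obstacle is precisely this removal of the point $0$, which is where the selfadjointness and the injectivity of $A_{\red}$ are essential.
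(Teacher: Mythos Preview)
Your argument is correct and follows essentially the same route as the paper: both reduce to $A_{\red}$, note that it is selfadjoint and injective, use \prettyref{prop:spectrum_reduced} to match the spectra away from $0$, and invoke the fact that an injective selfadjoint operator cannot have $0$ as an isolated spectral value (the paper cites \cite[Corollary 5.11]{Schmuedgen2012} for this) together with \prettyref{prop:reduced_op}\,(a). Your write-up is simply more detailed, in particular spelling out the ``isolated point $\Rightarrow$ eigenvalue'' step and offering the spectral-measure alternative.
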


\begin{proof}
First note that $A_{\red}$ is selfadjoint and one-to-one. Hence,
$0$ is not an isolated value of $\sigma(A_{\red})$ (see e.g. \cite[Corollary 5.11]{Schmuedgen2012}).
Thus, by \prettyref{prop:spectrum_reduced} $B(0,r)\cap\sigma(A)\subseteq\{0\}$
is equivalent to $0\in\rho(A_{\red}),$ which by \prettyref{prop:reduced_op}
is equivalent to the closedness of $\ran(A).$ 
\end{proof}

\section{Abstract Periodicity\label{sec:Abstract-Periodicity}}

Let $H_{0},H_{1}$ be two Hilbert spaces and $A_{c}\colon\dom(A_{c})\subseteq H_{0}\to H_{1}$
as well as $B_{c}\colon\dom(B_{c})\subseteq H_{1}\to H_{0}$ be two
densely defined linear operators satisfying 
\[
A_{c}\subseteq-B_{c}^{\ast}.
\]

By taking adjoints, we obtain 
\[
B_{c}\subseteq-A_{c}^{\ast}
\]
and we set 
\[
A\coloneqq-B_{c}^{\ast}\quad B\coloneqq-A_{c}^{\ast}.
\]
 As a consequence, $A_{c}\subseteq A$ and $B_{c}\subseteq B$ are
closable and we set $A_{0}\coloneqq\overline{A_{c}}$ and $B_{0}\coloneqq\overline{B_{c}}$.
Note that $A=-B_{0}^{\ast}$ and $B=-A_{0}^{\ast}$.
\begin{rem}
\label{rem:grad_div}The abstract setting above reflects the classical
definition of linear differential operators with and without boundary
conditions. Indeed, if $H_{0}=L_{2}(\Omega),H_{1}=L_{2}(\Omega)^{n}$
for some open $\Omega\subseteq\R^{n}$ we can set $A_{c}\colon C_{c}^{\infty}(\Omega)\subseteq L_{2}(\Omega)\to L_{2}(\Omega)^{n}$
by $A_{c}\phi\coloneqq\grad\phi$, where $C_{c}^{\infty}(\Omega)$
denotes the space of compactly supported infinitely often differentiable
functions on $\Omega$ and $B_{c}\colon C_{c}^{\infty}(\Omega)^{n}\subseteq L_{2}(\Omega)^{n}\to L_{2}(\Omega)$
by $B_{c}\Psi\coloneqq\dive\Psi$. Then integration by parts yields
$A_{c}\subseteq-B_{c}^{\ast}$ and by definition $A=-B_{c}^{\ast}=\grad$
with domain $H^{1}(\Omega).$ Likewise, $A_{0}=\overline{A_{c}}$
is the gradient with domain $H_{0}^{1}(\Omega)$; that is, the elements
satisfy an abstract homogeneous Dirichlet boundary condition. Similarly
$B=-A_{c}^{\ast}=\dive$ with maximal domain; i.e., 
\[
\dom(B)=\{\Psi\in L_{2}(\Omega)^{n}\,;\,\dive\Psi\in L_{2}(\Omega)\},
\]
where the divergence is defined in the distributional sense and the
elements in $\dom(B_{0})$ satisfy a generalised homogeneous Neumann
boundary condition (see \cite{Picard_McGhee,STW2022} for details).
\end{rem}

The focus of this section is on the following restrictions of $A$
and $B$.
\begin{defn*}
We define the restrictions $A^{\#}$ and $B^{\#}$ of $A$ and $B$,
respectively, by the domains 
\begin{align*}
\dom(A^{\#}) & \coloneqq\{x\in\dom(A)\,;\,Ax\in\overline{\ran}(A_{0})\}\\
\dom(B^{\#}) & \coloneqq\{y\in\dom(B)\,;\,Bx\in\overline{\ran}(B_{0})\}.
\end{align*}
\end{defn*}
\begin{rem}
Note that $\overline{\ran}(A_{0})=\left(\ker A_{0}^{\ast}\right)^{\bot}=\left(\ker B\right)^{\bot}$
and thus, 
\[
\dom(A^{\#})=\{x\in\dom(A)\,;\,Ax\bot\ker B\},
\]
and likewise for $B^{\#}.$ Moreover, by definition $A_{0}\subseteq A^{\#}\subseteq A$
and $B_{0}\subseteq B^{\#}\subseteq B.$ Finally, it is immediate
that $A^{\#}$ and $B^{\#}$ are closed.
\end{rem}

\begin{example}
\label{exa:periodicity-boundary-conditions}The boundary conditions
induced by the domain constraints of $A^{\#}$ can be interpreted
as an abstract version of periodicity. Indeed, if $\Omega=]-1/2,1/2[$,
then as discussed already in the introduction, $A_{0}=B_{0}=\partial_{0}$
and $A=B=\partial,$ where $\dom(A)=H^{1}(]-1/2,1/2[)$ and $\dom(A_{0})=H_{0}^{1}(]-1/2,1/2[)=\{u\in H^{1}(]-1/2,1/2[)\,;\,u(-1/2)=u(1/2)=0\}$.
Then $u\in\dom(A^{\#})$ if and only if $u\in H^{1}(]-1/2,1/2[)$
and 
\[
\partial u\bot\ker\partial.
\]
Since $\ker\partial=\Span\{1\}$, we infer that 
\[
\int_{-1/2}^{1/2}\partial u(t)\d t=0,
\]
which is equivalent to $u(1/2)=u(-1/2).$ Hence 
\[
\dom(A^{\#})=\{u\in H^{1}(]-1/2,1/2[)\,;\,u(-1/2)=u(1/2)\}.
\]
\end{example}

It is our aim of this section to discuss, when $A^{\#}=-\left(B^{\#}\right)^{\ast}$.
First note that this cannot hold in general as the next example illustrates.
\begin{example}
We again consider the derivatives $\partial_{0}$ and $\partial$
but this time on the interval $[0,\infty[.$ More precisely, we set
$A_{0}=B_{0}\coloneqq\partial_{0}$ with domain $H_{0}^{1}([0,\infty[)=\{u\in H^{1}([0,\infty[)\,;\,u(0)=0\}$
and $A=B=\partial$ with domain $H^{1}([0,\infty[).$ Then $\ker\partial=\{0\}$
and thus, $u\in\dom(\partial^{\#})$ is equivalent to $u\in\dom(\partial)$
and thus, $\partial^{\#}=\partial.$ Since $\partial^{\ast}=-\partial_{0}\ne-\partial,$
the desired relation for the operator $\partial^{\#}$ cannot hold.
\end{example}

In order to characterise when $A^{\#}=-(B^{\#})^{\ast}$ actually
holds, we start with the following observation.
\begin{prop}
\label{prop:adjoint_B=000023}We have that 
\[
\overline{A|_{\ker(A)+\dom(A_{0})}}=-\left(B^{\#}\right)^{\ast}.
\]
\end{prop}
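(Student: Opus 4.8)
The plan is to avoid computing $\left(B^{\#}\right)^{\ast}$ directly and instead to compute the adjoint of the operator $C\coloneqq A|_{\ker(A)+\dom(A_{0})}$ and then dualise twice. Since $\dom(C)=\ker(A)+\dom(A_{0})\supseteq\dom(A_{0})$ and $A_{0}$ is densely defined, $C$ is densely defined; moreover $C\subseteq A$ with $A$ closed, so $C$ is closable and $\overline{C}=C^{\ast\ast}$. Hence it suffices to establish the single identity $C^{\ast}=-B^{\#}$, for then $\overline{A|_{\ker(A)+\dom(A_{0})}}=C^{\ast\ast}=(C^{\ast})^{\ast}=\left(-B^{\#}\right)^{\ast}=-\left(B^{\#}\right)^{\ast}$, which is the assertion.

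Before computing $C^{\ast}$ I would record two identifications coming straight from the definitions. First, $A_{0}^{\ast}=-B$: indeed $A_{0}=\overline{A_{c}}$ gives $A_{0}^{\ast}=A_{c}^{\ast}=-B$. Second, $\ker(A)^{\bot}=\overline{\ran}(B_{0})$, which follows from $A=-B_{0}^{\ast}$, so that $\ker(A)=\ker(B_{0}^{\ast})=\overline{\ran}(B_{0})^{\bot}$. The other structural observation I need is that $C$ only "sees" the $\dom(A_{0})$-component of its argument: for $w=u+v$ with $u\in\ker(A)$ and $v\in\dom(A_{0})$ one has $Cw=Au+A_{0}v=A_{0}v$.

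With these in hand the adjoint computation is short. Fix $y\in H_{1}$ and $z\in H_{0}$ and read off the defining condition $\langle Cw,y\rangle=\langle w,z\rangle$ for all $w\in\dom(C)$. Inserting $v=0$ and letting $u$ range over $\ker(A)$ forces $z\bot\ker(A)$, i.e. $z\in\ker(A)^{\bot}=\overline{\ran}(B_{0})$; inserting $u=0$ and letting $v$ range over $\dom(A_{0})$ gives $\langle A_{0}v,y\rangle=\langle v,z\rangle$ for all $v\in\dom(A_{0})$, i.e. $y\in\dom(A_{0}^{\ast})$ with $A_{0}^{\ast}y=z$, equivalently $y\in\dom(B)$ with $By=-z$. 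Combining the two, $y\in\dom(C^{\ast})$ holds exactly when $y\in\dom(B)$ and $By=-z\in\overline{\ran}(B_{0})$, and this is precisely the condition defining $\dom(B^{\#})$, with $C^{\ast}y=z=-By$. The reverse inclusion is the same chain of equalities read in the opposite direction (using $u\bot z$ to discard the $\ker(A)$-contribution), so one obtains the operator identity $C^{\ast}=-B^{\#}$ including equality of domains, completing the proof.

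I do not anticipate a genuine obstacle here; the one place to be careful is the bookkeeping in the adjoint computation, where the range condition $z\in\overline{\ran}(B_{0})$ must be extracted from the $\ker(A)$-directions while the operator identity $By=-z$ comes from the $\dom(A_{0})$-directions, and these two pieces must be matched precisely with the two clauses defining $\dom(B^{\#})$. Verifying that $C$ is closable with $\overline{C}=C^{\ast\ast}$ and that the identifications $A_{0}^{\ast}=-B$ and $\ker(A)^{\bot}=\overline{\ran}(B_{0})$ hold is routine given the standing assumptions $A=-B_{0}^{\ast}$ and $B=-A_{0}^{\ast}$.
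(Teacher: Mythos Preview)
Your proposal is correct and follows essentially the same route as the paper: set $\tilde{A}=A|_{\ker(A)+\dom(A_{0})}$, reduce via $\overline{\tilde{A}}=\tilde{A}^{\ast\ast}$ to proving $\tilde{A}^{\ast}=-B^{\#}$, and verify this identity by testing the adjoint condition separately on the $\ker(A)$-directions (yielding the range constraint $By\in\overline{\ran}(B_{0})$) and on the $\dom(A_{0})$-directions (yielding $y\in\dom(B)$ with $\tilde{A}^{\ast}y=-By$). The only cosmetic difference is that you record the identifications $A_{0}^{\ast}=-B$ and $\ker(A)^{\bot}=\overline{\ran}(B_{0})$ up front, whereas the paper invokes them in line.
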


\begin{proof}
For notational convenience, we set $\tilde{A}\coloneqq A|_{\ker(A)+\dom(A_{0})}.$
Since $\tilde{A}\subseteq A,$ we infer that $\tilde{A}$ is closable.
Thus, the statement we want to show is equivalent to 
\[
-B^{\#}=\tilde{A}^{\ast}.
\]
Let $y\in\dom(\tilde{A}^{\ast})$. Then, for all $x\in\dom(\tilde{A})=\dom(A_{0})+\ker A$
we have 
\[
\langle\tilde{A}x,y\rangle=\langle x,\tilde{A}^{\ast}y\rangle.
\]
Choosing $x\in\dom(A_{0}),$ we infer that 
\[
\langle A_{0}x,y\rangle=\langle x,\tilde{A}^{\ast}y\rangle
\]
and hence, $y\in\dom(A_{0}^{\ast})=\dom(B)$ and $\tilde{A}^{\ast}y=-By.$
Moreover, choosing $x\in\ker A,$ we infer 
\[
\langle x,\tilde{A}^{\ast}y\rangle=0,
\]
that is, $-By=\tilde{A}^{\ast}y\in\left(\ker A\right)^{\bot}=\overline{\ran}(B_{0}),$
which yields $y\in\dom(B^{\#})$ and hence, $\tilde{A}^{\ast}\subseteq-B^{\#}.$
For the other inclusion, let $y\in\dom(B^{\#}).$ Then for $x\in\dom(\tilde{A})$,
that is, $x=x_{0}+x_{1}$ with $x_{0}\in\dom(A_{0})$ and $x_{1}\in\ker(A)$,
we compute 
\[
\langle y,\tilde{A}x\rangle=\langle y,A_{0}x_{0}\rangle=\langle-B^{\#}y,x_{0}\rangle=\langle-B^{\#}y,x\rangle,
\]
where we have used $B^{\#}y\in\left(\ker A\right)^{\bot}$ in the
last equality. This shows $-B^{\#}\subseteq\tilde{A}^{\ast}.$ 
\end{proof}
The latter result shows that always $-(B^{\#})^{\ast}\subseteq A^{\#}$,
since $A|_{\ker(A)+\dom(A_{0})}\subseteq A^{\#}$ and equality holds,
if and only if $\ker(A)+\dom(A_{0})$ is a core for $A^{\#}$. We
inspect this property a bit closer.
\begin{prop}
The set $\ker(A)+\dom(A_{0})$ is a core for $A^{\#}$ if and only
if $1\notin P\sigma(B^{\#}A^{\#})$, i.e., $1$ in no eigenvalue of
$B^{\#}A^{\#}.$ 
\end{prop}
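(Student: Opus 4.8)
The plan is to reduce the stated equivalence to a density statement in the graph norm of $A^{\#}$ and then to identify the relevant orthogonal complement with the eigenspace $\ker(1-B^{\#}A^{\#})$.

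First I would recall from \prettyref{prop:adjoint_B=000023} and the discussion following it that $\ker(A)+\dom(A_{0})$ is a core for $A^{\#}$ if and only if $A^{\#}=-(B^{\#})^{\ast}$, and that the inclusion $-(B^{\#})^{\ast}\subseteq A^{\#}$ always holds. Since $A^{\#}$ is closed, I would equip $D\coloneqq\dom(A^{\#})$ with the graph inner product $\langle x,y\rangle_{A^{\#}}\coloneqq\langle x,y\rangle+\langle A^{\#}x,A^{\#}y\rangle$, making $D$ a Hilbert space. Because $-(B^{\#})^{\ast}$ is closed and contained in $A^{\#}$, its domain $\dom((B^{\#})^{\ast})$ is a \emph{closed} subspace of $D$: the isometry $x\mapsto(x,A^{\#}x)$ carries it onto the closed graph $G(-(B^{\#})^{\ast})\subseteq G(A^{\#})$. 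Hence the equality $A^{\#}=-(B^{\#})^{\ast}$, equivalently $\dom((B^{\#})^{\ast})=D$, holds if and only if this closed subspace is dense in $D$, i.e. if and only if its orthogonal complement in $D$ is trivial.

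The core computation is then to identify this orthogonal complement. For $x\in D$ and $u\in\dom((B^{\#})^{\ast})$ one has $A^{\#}u=-(B^{\#})^{\ast}u$, so
\[
\langle x,u\rangle_{A^{\#}}=\langle x,u\rangle-\langle A^{\#}x,(B^{\#})^{\ast}u\rangle .
\]
Thus $x$ is orthogonal to $\dom((B^{\#})^{\ast})$ in $D$ exactly when $\langle A^{\#}x,(B^{\#})^{\ast}u\rangle=\langle x,u\rangle$ for all $u\in\dom((B^{\#})^{\ast})$. By the very definition of the adjoint of $(B^{\#})^{\ast}$, together with the fact that $B^{\#}$ is closed so that $(B^{\#})^{\ast\ast}=B^{\#}$, this is precisely the statement that $A^{\#}x\in\dom(B^{\#})$ with $B^{\#}A^{\#}x=x$. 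Hence the orthogonal complement equals $\{x\in\dom(B^{\#}A^{\#})\,;\,B^{\#}A^{\#}x=x\}=\ker(1-B^{\#}A^{\#})$, which is nontrivial if and only if $1\in P\sigma(B^{\#}A^{\#})$.

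Combining these observations, the orthogonal complement is trivial --- equivalently $A^{\#}=-(B^{\#})^{\ast}$ and $\ker(A)+\dom(A_{0})$ is a core for $A^{\#}$ --- if and only if $1\notin P\sigma(B^{\#}A^{\#})$. I expect the one delicate point to be the adjoint manipulation in the displayed identity: one must invoke the definition of $(B^{\#})^{\ast\ast}$ carefully to pass from the orthogonality relation, valid for all $u\in\dom((B^{\#})^{\ast})$, to the genuine membership $A^{\#}x\in\dom(B^{\#})$ together with the eigenvalue equation $B^{\#}A^{\#}x=x$, rather than merely a formal identity.
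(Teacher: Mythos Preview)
Your argument is correct and shares the same core idea as the paper's proof: compute the orthogonal complement (with respect to the graph inner product of $A^{\#}$) of a suitable subspace and identify it with $\ker(1-B^{\#}A^{\#})$.

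The difference lies only in which subspace you take. The paper works directly with $\ker(A)+\dom(A_{0})$ and splits the orthogonality condition into two pieces: orthogonality to $\ker(A)$ yields $x\in\ker(A)^{\bot}=\overline{\ran}(B_{0})$, while orthogonality to $\dom(A_{0})$ (via $B=-A_{0}^{\ast}$) yields $A^{\#}x\in\dom(B)$ with $BA^{\#}x=x$; combining the two then gives $A^{\#}x\in\dom(B^{\#})$. You instead first invoke \prettyref{prop:adjoint_B=000023} to pass to the graph-norm closure $\dom((B^{\#})^{\ast})$ of $\ker(A)+\dom(A_{0})$, and then a single application of $(B^{\#})^{\ast\ast}=B^{\#}$ produces the eigenvalue equation in one stroke. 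Your route is a bit slicker once \prettyref{prop:adjoint_B=000023} is available, whereas the paper's computation is self-contained and does not rely on that proposition; both land on exactly the same eigenspace.
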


\begin{proof}
We remark that $\ker(A)+\dom(A_{0})$ is a core for $A^{\#}$ if and
only if $\ker(A)+\dom(A_{0})$ is dense in $\dom(A^{\#})$ with respect
to the graph norm of $A^{\#}$, which in turn is equivalent to 
\[
\left(\ker(A)+\dom(A_{0})\right)^{\bot_{A^{\#}}}=\{0\},
\]
where the orthogonal complement is taken in the graph inner product
of $A^{\#}.$ We characterise the set on the left-hand side as follows:
\begin{align*}
x\in\left(\ker(A)+\dom(A_{0})\right)^{\bot_{A^{\#}}} & \Leftrightarrow\forall y\in\ker(A)+\dom(A_{0}):\,\langle x,y\rangle+\langle A^{\#}x,A^{\#}y\rangle=0,\\
 & \Leftrightarrow x\in\ker(A)^{\bot}\wedge\forall y\in\dom(A_{0}):\,\langle x,y\rangle+\langle A^{\#}x,A_{0}y\rangle=0,\\
 & \Leftrightarrow x\in\overline{\ran}(B_{0})\wedge A^{\#}x\in\dom(B)\wedge BA^{\#}x=x,\\
 & \Leftrightarrow A^{\#}x\in\dom(B^{\#})\wedge B^{\#}A^{\#}x=x,\\
 & \Leftrightarrow x\in\ker(1-B^{\#}A^{\#}).
\end{align*}
Thus, $\ker(A)+\dom(A_{0})$ is a core for $A^{\#}$ if and only if
$\ker(1-B^{\#}A^{\#})=\{0\},$ which means $1\notin P\sigma(B^{\#}A^{\#}).$
\end{proof}
Next, we provide a sufficient condition for $\ker(A)+\dom(A_{0})$
being a core for $A^{\#}$.
\begin{prop}
\label{prop:core_for_A=000023}Assume that $\ran(A^{\#})$ is closed.
Then the space $\dom(A_{0})+\ker A\subseteq\dom(A^{\#})$ is a core
for $A^{\#}$.
\end{prop}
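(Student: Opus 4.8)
The plan is to establish the core property directly, by approximating every element of $\dom(A^{\#})$ in the graph norm of $A^{\#}$ by elements of $\dom(A_0)+\ker A$, rather than by verifying the eigenvalue criterion of the preceding proposition. First I would record two consequences of the hypothesis. Since $A_0\subseteq A^{\#}$ and, by definition of $\dom(A^{\#})$, $\ran(A^{\#})\subseteq\overline{\ran}(A_0)$, we have $\ran(A_0)\subseteq\ran(A^{\#})\subseteq\overline{\ran}(A_0)$ and hence $\overline{\ran}(A^{\#})=\overline{\ran}(A_0)$. Closedness of $\ran(A^{\#})$ therefore yields $\ran(A^{\#})=\overline{\ran}(A_0)$, and by \prettyref{prop:reduced_op}(a) the reduced operator $(A^{\#})_{\red}$ is boundedly invertible. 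Moreover $\ker(A^{\#})=\ker A$, so writing $P$ for the orthogonal projection onto $\ker(A)^{\bot}$ and decomposing $x=Px+(1-P)x$ with $(1-P)x\in\ker A\subseteq\dom(A_0)+\ker A$, it suffices to approximate elements $x\in\dom(A^{\#})\cap\ker(A)^{\bot}=\dom((A^{\#})_{\red})$.

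So fix such an $x$ and set $u:=A^{\#}x\in\overline{\ran}(A_0)$. By density I would choose $w_n\in\dom(A_0)$ with $A_0w_n\to u$ and put $z_n:=Pw_n$. Since $(1-P)w_n\in\ker A\subseteq\dom(A)$, we get $z_n\in\dom(A)$ with $Az_n=A_0w_n$, so that $z_n=w_n-(1-P)w_n\in\dom(A_0)+\ker A$ already lies in the candidate core, and its image $Az_n=A_0w_n$ converges to $u=A^{\#}x$.

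The crux is to show $z_n\to x$ in $H_0$, and this is where the hypothesis enters. The naive idea of inverting $A_{\red}$ on $A_{\red}z_n=A_0w_n\to u=A_{\red}x$ fails, because $\ran(A)$ need not be closed, i.e.\ $A_{\red}$ need not be boundedly invertible. The key observation is that the projected sequence remains in the smaller domain: since $Az_n=A_0w_n\in\ran(A_0)\subseteq\overline{\ran}(A_0)$ and $z_n\in\ker(A)^{\bot}$, we in fact have $z_n\in\dom(A^{\#})\cap\ker(A)^{\bot}=\dom((A^{\#})_{\red})$ with $(A^{\#})_{\red}z_n=A_0w_n$. Applying the bounded inverse of $(A^{\#})_{\red}$ to $(A^{\#})_{\red}z_n\to u=(A^{\#})_{\red}x$ then gives $z_n\to x$; together with $Az_n\to A^{\#}x$ this is graph-norm convergence, so $x$ lies in the graph-norm closure of $\dom(A_0)+\ker A$, and reassembling with the kernel part handled above finishes the proof. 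I expect the main obstacle to be exactly this routing through $(A^{\#})_{\red}$ instead of $A_{\red}$: one must notice that taking the $\ker(A)^{\bot}$-component of $w_n$ not only keeps the vector in $\dom(A_0)+\ker A$ but also preserves the range constraint defining $\dom(A^{\#})$, which is precisely what makes the only available bounded invertibility, that of $(A^{\#})_{\red}$, applicable.
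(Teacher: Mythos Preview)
Your argument is correct and is essentially the same as the paper's: the paper packages the step into a general lemma (\prettyref{thm:weak=00003Dstrong}) about operators $S\subseteq T$ with $T$ closed, $\ran(T)$ closed, $\ran(S)$ dense in $\ran(T)$ and $\ker S=\ker T$, and then applies it with $S=A|_{\ker(A)+\dom(A_{0})}$ and $T=A^{\#}$, but the proof of that lemma is precisely your decomposition---project an approximating sequence $u_{n}$ (your $w_{n}$) onto $\ker(T)^{\bot}=\ker(A)^{\bot}$ and invoke the bounded invertibility of $T_{\red}=(A^{\#})_{\red}$ to get convergence. The only difference is cosmetic: you carry out the computation directly for $A^{\#}$ instead of first isolating the abstract statement.
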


To prove the latter proposition, we show a more general theorem, which
can by applied in the above situation.
\begin{thm}
\label{thm:weak=00003Dstrong}Let $S\colon\dom(S)\subseteq H_{0}\to H_{1}$
and $T\colon\dom(T)\subseteq H_{0}\to H_{1}$ be two densely defined
linear operators with $S\subseteq T.$ Assume further that 
\begin{itemize}
\item $T$ is closed with closed range,
\item $\ran(S)$ lies dense in $\ran(T),$
\item $\ker(S)=\ker(T)$.
\end{itemize}
Then $T=\overline{S}.$ 
\end{thm}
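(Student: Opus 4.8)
The plan is to prove the two inclusions $\overline{S}\subseteq T$ and $T\subseteq\overline{S}$ separately. The first is immediate: from $S\subseteq T$ and the closedness of $T$ it follows that $S$ is closable with $\overline{S}\subseteq T$. Hence the entire content of the theorem lies in the reverse inclusion $T\subseteq\overline{S}$, i.e.\ in showing $\dom(T)\subseteq\dom(\overline{S})$ with the actions agreeing.

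The quantitative device I would isolate first is a Poincar\'e-type estimate. Write $N\coloneqq\ker(S)=\ker(T)$, which is closed since $T$ is. Because $\ran(T)$ is closed, \prettyref{prop:reduced_op}(a) guarantees that $T_{\red}$ is boundedly invertible; equivalently there is a constant $C>0$ such that
\[
\|u\|\le C\,\|Tu\|\qquad(u\in\dom(T)\cap N^{\bot}).
\]
Since $S\subseteq T$ and both operators have kernel $N$, every $u\in\dom(S)\cap N^{\bot}$ also lies in $\dom(T)\cap N^{\bot}$ with $Su=Tu$, so the same bound holds with $S$ in place of $T$ on $\dom(S)\cap N^{\bot}$.

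Given $x\in\dom(T)$, the idea is to produce an approximating sequence from $\dom(S)$ and to use the estimate to control it. Set $z\coloneqq Tx\in\ran(T)$; the density hypothesis together with the closedness of $\ran(T)$ gives $\ran(T)\subseteq\overline{\ran}(S)$, so there exist $u_{n}\in\dom(S)$ with $Su_{n}\to z$. Let $P$ denote the orthogonal projection onto $N^{\bot}$. Since $N=\ker(S)\subseteq\dom(S)$, the vectors $v_{n}\coloneqq Pu_{n}$ lie in $\dom(S)\cap N^{\bot}$ and satisfy $Sv_{n}=Su_{n}$. The estimate then yields $\|v_{n}-v_{m}\|\le C\|Su_{n}-Su_{m}\|\to0$, so $(v_{n})$ converges to some $v\in N^{\bot}$ with $Sv_{n}\to z$; closedness of the graph of $\overline{S}$ gives $v\in\dom(\overline{S})$ and $\overline{S}v=z$.

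It then remains to reassemble $x$. As $\overline{S}\subseteq T$, we have $Tv=z=Tx$, whence $x-v\in\ker(T)=\ker(S)\subseteq\dom(\overline{S})$ with $\overline{S}(x-v)=0$. Consequently $x=v+(x-v)\in\dom(\overline{S})$ and $\overline{S}x=z=Tx$, which establishes $T\subseteq\overline{S}$ and hence $T=\overline{S}$. I expect the only genuine obstacle to be the uniform estimate: this is precisely the point where the hypotheses \emph{$\ran(T)$ closed} and \emph{$\ker(S)=\ker(T)$} interact, through the bounded invertibility of the reduced operator. Once that is in hand, the density assumption merely supplies images $Su_{n}$ converging to $z$, and the estimate is what upgrades convergence of the images to convergence of the preimages $v_{n}$.
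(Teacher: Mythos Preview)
Your proof is correct and follows essentially the same route as the paper. Both arguments use the bounded invertibility of $T_{\red}$ (your Poincar\'e-type estimate) together with $\ker(S)=\ker(T)$ to upgrade the convergence $Su_{n}\to Tx$ to convergence of the projected preimages $Pu_{n}$; the only cosmetic difference is that the paper decomposes $x=x_{0}+x_{1}$ along $\ker(T)\oplus\ker(T)^{\bot}$ at the outset and identifies the limit directly as $x_{1}$ via $T_{\red}^{-1}$, whereas you first obtain an abstract limit $v$ and then reassemble $x=v+(x-v)$ at the end.
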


\begin{proof}
The inclusion $\overline{S}\subseteq T$ is obvious. For the other
inclusion, we recall that $T_{\red}$ is boundedly invertible by \prettyref{prop:reduced_op}.
Let now $x\in\dom(T)$ and decompose $x=x_{0}+x_{1}$ with $x_{0}\in\ker(T)$
and $x_{1}\in\ker(T)^{\bot}.$ Since $x_{0}\in\ker(S)\subseteq\dom(S)$
it suffices to show $x_{1}\in\dom(\overline{S}).$ For this, we choose
a sequence $(u_{n})_{n\in\N}$ in $\dom(S)$ with $Su_{n}\to Tx_{1}.$
Let $P$ denote the projection onto $\ker(S)^{\bot}=\ker(T)^{\bot}.$
Then 
\[
T_{\red}Pu_{n}=Tu_{n}=Su_{n}\to Tx_{1}=T_{\red}x_{1}
\]
 and hence, by the bounded invertibility of $T_{\red},$ we infer
$Pu_{n}\to x_{1}$. Since $Pu_{n}\in\ker(S)^{\bot}$ we infer, that
$Pu_{n}\in\dom(S)$ and hence $x_{1}\in\dom(\overline{S}),$ since
$SPu_{n}=Su_{n}\to Tx_{1}.$ 
\end{proof}
\begin{proof}[Proof of \prettyref{prop:core_for_A=000023}]
 We apply \prettyref{thm:weak=00003Dstrong} to $S=A|_{\ker(A)+\dom(A_{0})}$
and $T=A^{\#}.$ Then clearly, $S\subseteq T$ and $T$ has a closed
range by assumption. Moreover, $\ran(S)=\ran(A_{0})$ and since $\ran(T)=\ran(A^{\#})\subseteq\overline{\ran(A_{0})},$
the range of $S$ lies dense in the range of $T$. Finally, we have
that $A^{\#}x=0$ for some $x\in\dom(A^{\#})$ implies $x\in\ker(A)$
and thus,~$x\in\dom(S)$ with $Sx=0.$ Thus, $\ker S=\ker T$ and
\prettyref{thm:weak=00003Dstrong} yields the assertion. 
\end{proof}
We summarise the result of this section in the next theorem.
\begin{thm}
\label{thm:adjont_periodic}We have 
\[
-\left(B^{\#}\right)^{\ast}\subseteq A^{\#}.
\]

Moreover, the following statements are equivalent:

\begin{enumerate}[(i)]

\item $-(B^{\#})^{\ast}=A^{\#},$

\item $\ker(A)+\dom(A_{0})$ is a core for $A^{\#}$,

\item $1\notin P\sigma(B^{\#}A^{\#}).$

\end{enumerate}

Moreover, the statements (i)-(iii) hold, if $\ran(A^{\#})$ is closed. 
\end{thm}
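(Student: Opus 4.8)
The plan is to assemble the final theorem directly from the propositions already established in this section, treating it as a bookkeeping exercise rather than a fresh argument. The inclusion $-(B^{\#})^{\ast}\subseteq A^{\#}$ is handled by \prettyref{prop:adjoint_B=000023}, which identifies $-(B^{\#})^{\ast}$ with the closure of $A|_{\ker(A)+\dom(A_{0})}$; since this restriction is contained in $A^{\#}$ and $A^{\#}$ is closed, the closure is contained in $A^{\#}$ as well. This gives the first displayed assertion immediately.

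For the equivalence of (i)--(iii), I would chain the preceding results. The identity from \prettyref{prop:adjoint_B=000023} shows that $-(B^{\#})^{\ast}=A^{\#}$ holds precisely when $\overline{A|_{\ker(A)+\dom(A_{0})}}=A^{\#}$, i.e. precisely when $\ker(A)+\dom(A_{0})$ is a core for $A^{\#}$. This is the equivalence of (i) and (ii). The equivalence of (ii) and (iii) is exactly the content of the proposition stating that $\ker(A)+\dom(A_{0})$ is a core for $A^{\#}$ if and only if $1\notin P\sigma(B^{\#}A^{\#})$. So the three-way equivalence follows by simply citing these two results in succession.

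The final clause, that (i)--(iii) hold whenever $\ran(A^{\#})$ is closed, is supplied by \prettyref{prop:core_for_A=000023}, which asserts that closedness of $\ran(A^{\#})$ forces $\dom(A_{0})+\ker A$ to be a core for $A^{\#}$; this is statement (ii), so all three hold. No genuinely new mathematics is required here.

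The only point demanding a little care--and the closest thing to an obstacle--is making sure the core property is being read in the right operator's graph norm and that the equivalence (i)$\Leftrightarrow$(ii) correctly uses the identification of $-(B^{\#})^{\ast}$ as a closure of a restriction of $A$, rather than of $A^{\#}$ itself. Since $A|_{\ker(A)+\dom(A_{0})}$ and $A^{\#}$ agree on the common domain and both are restrictions of $A$, the graph norms coincide on $\ker(A)+\dom(A_{0})$, so \textquotedblleft core for $A^{\#}$\textquotedblright{} is unambiguous; I would state this alignment explicitly so the reader sees that the closure computed in \prettyref{prop:adjoint_B=000023} is the same object as the graph-norm closure appearing in the core condition. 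With that observation in place the proof is a short concatenation of citations.
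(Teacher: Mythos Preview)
Your proposal is correct and matches the paper's approach exactly: the theorem is presented there as a summary of the preceding results, with the text immediately after \prettyref{prop:adjoint_B=000023} already noting both the inclusion $-(B^{\#})^{\ast}\subseteq A^{\#}$ and the equivalence (i)$\Leftrightarrow$(ii), the next proposition giving (ii)$\Leftrightarrow$(iii), and \prettyref{prop:core_for_A=000023} supplying the final clause. Your extra remark about the graph norms agreeing is a harmless clarification but not strictly needed, since $A|_{\ker(A)+\dom(A_{0})}\subseteq A^{\#}\subseteq A$ makes the relevant closures automatically coincide.
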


\begin{rem}
We note that statement (i) in \prettyref{thm:adjont_periodic} is
symmetric in $A$ and $B$, so we can replace the statements (ii)
and (iii) by the analogous statements involving $B$ instead of $A$.
By the same reason, the closedness of $\ran(B^{\#})$ would also be
sufficient for statement (i) to be true.
\end{rem}

\begin{example}
Let $S\subseteq\dom(S)\subseteq H\to H$ be a symmetric operator on
some Hilbert space $H$ with $S\geq cI$ for some $c>0.$ Then we
set $A_{0}\coloneqq\overline{S}$ and $B_{0}\coloneqq-\overline{S}$
and by the symmetry of $S$, we infer $A_{0}\subseteq-B_{0}^{\ast}\eqqcolon A,$
where $A=S^{\ast}.$ Hence, we are in the setting of this section.
Moreover, since $A_{0}\geq cI$ for some $c>0,$ we infer that $A_{0}$
has closed range and hence, so has $A^{\#}$ (see also \prettyref{prop:closed_range_via_A}
below). Thus, by \prettyref{prop:core_for_A=000023} 
\[
A^{\#}=\overline{S^{\ast}|_{\dom(\overline{S})+\ker(S^{\ast})}}=\overline{S^{\ast}|_{\dom(S)+\ker(S^{\ast})}}
\]
and by \prettyref{thm:adjont_periodic}
\[
A^{\#}=-(B^{\#})^{\ast}=\left(A^{\#}\right)^{\ast}.
\]
Thus, $A^{\#}$ is a selfadjoint extension of $S$ known as the Krein--von
Neumann extension of $S$ (see e.g. \cite[Section 13.3]{Schmuedgen2012}
or \cite{Fucci2022} and the references therein). 
\end{example}

We emphasise that the closedness of $\ran(A^{\#})$ is just a sufficient
condition as the following example shows.
\begin{example}
Let $\Omega=\R^{3}\setminus B[0,1]=\{x\in\R^{3}\,;\,\|x\|>1\}$ and
consider the operators $\curl_{0}$ and $\curl$ on $L_{2}(\Omega)^{3}$
given as the usual vector-analytical operator with the domains 
\begin{align*}
\dom(\curl) & \coloneqq\{u\in L_{2}(\Omega)^{3}\,;\,\curl u\in L_{2}(\Omega)^{3}\},\\
\dom(\curl_{0}) & \coloneqq\overline{C_{c}^{\infty}(\Omega)}^{\dom(\curl)},
\end{align*}
where in the first domain $\curl u$ is defined in the sense of distributions
and in the second domain the closure is taken with respect to the
graph norm of $\curl$. Then $\curl_{0}\subseteq\curl$ are both closed
densely defined operators and $\curl_{0}^{\ast}=\curl$. This situation
fits in the abstract setting considered in this section by choosing
$A_{0}\coloneqq\curl_{0},B_{0}\coloneqq-\curl_{0}$ as well as $A\coloneqq\curl,B\coloneqq-\curl.$
In contrast to the bounded domain case, where the spectrum is discrete
(see \prettyref{thm:Filonov}), we have that $\sigma(\curl^{\#})=\R$
according to \cite[p.333-334]{Picard1998_exterior}. Moreover, by
\cite{Picard1998} and \cite[Theorem 2.6]{Picard1998_exterior} $\curl^{\#}$
is indeed selfadjoint and hence, $\ran(\curl^{\#})$ cannot be closed
by \prettyref{cor:isolated_value}.
\end{example}

\begin{rem}
The construction in this example extends to the exterior derivative
$d$ on $q$-forms as an operator in $L_{2}^{q}\left(M\right)$ with
$M$ a bounded open subset of an $N$-dimensional Riemannian Lipschitz
manifold with $N=2q+1$.

With the above mechanism we get
\[
*\d^{\#}\text{ is }\begin{cases}
\text{selfadjoint for } & q\text{ odd,}\\
\text{skew-selfadjoint for } & q\text{ even},
\end{cases}
\]
where $\d$ denotes the exterior derivative on $q$-forms and $\ast$
is the Hodge-star-operator. These cases correspond to $N=4k+3$ and
$N=4k+1$ with $k=\left\lfloor \frac{q}{2}\right\rfloor $, respectively.
In particular, for $q=1$ we recover the force-free magnetic field
case. For example, if $q=0,2$, i.e. $N=1$ or $N=5$, respectively,
we have that $*\d^{\#}$ is skew-selfadjoint. The case $N=1$ recovers
the standard 1-dimensional periodic boundary case (on finite intervals).
In both these cases (and in contrast to $N=3$), we have well-posedness
of the \emph{real}, (i.e. commuting with conjugation) evolutionary
(in the sense of \cite{Picard_McGhee}) problem
\[
\left(\partial_{t}+*\d^{\#}\right)u=f.
\]
In fact, we can insert any suitable material law operator as a coefficient
of $\partial_{t}$ (see \cite{Picard2009,STW2022}). The main problem
lies in the proof of the closedness of $\ran(\d^{\#})$, which can
be shown with the help of compact embedding results for the exterior
derivative and we refer to \cite{Weck1974,PWW2001} for sufficient
conditions. 
\end{rem}

\section{Conditions for a closed range of $A^{\#}$\label{sec:Conditions-for-a}}

We recall the setting from the previous section. Let $H_{0},H_{1}$
be two Hilbert spaces and $A_{0}$ and $A$ densely defined closed
linear operators with $A_{0}\subseteq A$ and $B_{0}\coloneqq-A^{\ast}$
and $B\coloneqq-A_{0}^{\ast}.$ Moreover, we set $A^{\#}\subseteq A$
with domain 
\[
\dom(A^{\#})\coloneqq\{x\in\dom(A)\,;\,Ax\in\overline{\ran}(A_{0})\}
\]
and analogously, we define $B^{\#}.$ By \prettyref{thm:adjont_periodic}
we know that $-\left(B^{\#}\right)^{\ast}=A^{\#}$ if we can ensure
that $\ran(A^{\#})$ is closed. This section is devoted to some conditions
ensuring the closedness of $\ran(A^{\#}).$ Moreover, we will address
the question, whether $D_{A_{\red}^{\#}}$ is compactly embedded into
$H_{0}$, which by \prettyref{prop:reduced_op} would also imply the
closedness of $\ran(A^{\#})$. 

We start with a simple observation.
\begin{prop}
\label{prop:closed_range_via_A}If $\ran(A_{0})$ or $\ran(A)$ is
closed, then $\ran(A^{\#})$ is closed as well. 
\end{prop}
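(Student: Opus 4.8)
The plan is to reduce the statement to a single structural identity for the range of $A^{\#}$, namely
\[
\ran(A^{\#}) = \ran(A) \cap \overline{\ran}(A_0),
\]
and then to read off the closedness in both cases directly from this identity. First I would prove the identity from the definition of $\dom(A^{\#})$. For the inclusion ``$\subseteq$'', any $y \in \ran(A^{\#})$ has the form $y = A^{\#}x = Ax$ with $x \in \dom(A^{\#})$; by definition $Ax \in \overline{\ran}(A_0)$, so $y$ lies in both $\ran(A)$ and $\overline{\ran}(A_0)$. For ``$\supseteq$'', any $y \in \ran(A) \cap \overline{\ran}(A_0)$ is $y = Ax$ for some $x \in \dom(A)$ with $Ax = y \in \overline{\ran}(A_0)$, which is precisely the membership condition defining $\dom(A^{\#})$, whence $y = A^{\#}x \in \ran(A^{\#})$. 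This step is essentially a tautology and should not present any difficulty.

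With the identity in hand, the case of $\ran(A)$ closed is immediate: $\ran(A)$ is then a closed subspace, $\overline{\ran}(A_0)$ is closed by construction, and the intersection of two closed subspaces is closed. For the case of $\ran(A_0)$ closed, I would first note the inclusion $\ran(A_0) \subseteq \ran(A)$, which follows from $A_0 \subseteq A$. Since $\ran(A_0)$ is assumed closed, we have $\overline{\ran}(A_0) = \ran(A_0)$, so the identity collapses to $\ran(A^{\#}) = \ran(A) \cap \ran(A_0) = \ran(A_0)$, which is closed by hypothesis.

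The argument is short and I do not expect a genuine obstacle; the only point requiring a little care is the bookkeeping of closures versus ranges, i.e. keeping $\ran(A_0)$ distinct from $\overline{\ran}(A_0)$ and verifying that, when $\ran(A_0)$ is closed, the intersection really simplifies to $\ran(A_0)$ (using $\ran(A_0) \subseteq \ran(A)$) rather than merely being contained in it. Should one prefer to sidestep the identity, an alternative is to sandwich $\ran(A_0) \subseteq \ran(A^{\#}) \subseteq \overline{\ran}(A_0)$ --- the first inclusion from $A_0 \subseteq A^{\#}$, the second from the defining domain constraint --- and argue from there; but the intersection formula seems the cleanest route and disposes of both hypotheses uniformly.
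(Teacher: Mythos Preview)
Your proof is correct and is, in substance, the same as the paper's. The paper does not state the identity $\ran(A^{\#}) = \ran(A)\cap\overline{\ran}(A_0)$ explicitly, but its two cases unwind to exactly that: for $\ran(A_0)$ closed it uses the sandwich $\ran(A_0)\subseteq\ran(A^{\#})\subseteq\overline{\ran}(A_0)$ you mention as an alternative, and for $\ran(A)$ closed it runs a sequential argument which is precisely the ``$\supseteq$'' inclusion of your identity applied to a limit point of $\ran(A^{\#})$. Your packaging via the intersection formula is marginally tidier in that it handles both hypotheses uniformly, but there is no new idea involved on either side.
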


\begin{proof}
Note that $\ran(A_{0})\subseteq\ran(A^{\#})\subseteq\overline{\ran}(A_{0})$
by definition, and hence, if $\ran(A_{0})$ is closed, we infer that
$\ran(A^{\#})=\ran(A_{0})$ is closed. If on the other hand $\ran(A)$
is closed and we have a sequence $(x_{n})_{n\in\N}$ in $\dom(A^{\#})$
such that $A^{\#}x_{n}\to y$ for some $y\in H,$ we derive $y=Ax$
for some $x\in\dom(A)$, since $A^{\#}\subseteq A.$ Since $Ax=y=\lim_{n\to\infty}A^{\#}x_{n}\in\overline{\ran}(A_{0}),$
we obtain $x\in\dom(A^{\#})$ and $y=A^{\#}x\in\ran(A^{\#}),$ yielding
the claim. 
\end{proof}
An analogous result holds for the compact embedding of $\dom(A^{\#})\cap\ker(A^{\#})^{\bot}$
into $H_{0}.$
\begin{prop}
\label{prop:compact_via_A}We have $D_{A_{\red}^{\#}}\hookrightarrow\hookrightarrow H_{0}$
if 
\begin{itemize}
\item $D_{(A_{0})_{\red}}\hookrightarrow\hookrightarrow H_{0}$ or, 
\item $D_{A_{\red}}\hookrightarrow\hookrightarrow H_{0}.$ 
\end{itemize}
\end{prop}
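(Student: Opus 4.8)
The plan is to treat the two hypotheses separately, after recording a few structural facts common to both. Since $A_{0}\subseteq A^{\#}\subseteq A$, and since $\ker(A)\subseteq\dom(A^{\#})$ (because $Ax=0\in\overline{\ran}(A_{0})$ whenever $x\in\ker(A)$), I would first observe that $\ker(A^{\#})=\ker(A)$, so that $\ker(A^{\#})^{\bot}=\ker(A)^{\bot}$ and hence $D_{A_{\red}^{\#}}=\dom(A^{\#})\cap\ker(A)^{\bot}$. Moreover, since $A^{\#}$ is a restriction of $A$, its graph norm is precisely the restriction of the graph norm of $A$ to $\dom(A^{\#})$. These two identifications are what let me pass between the operators $A_{0}$, $A^{\#}$ and $A$.

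Under the hypothesis $D_{A_{\red}}\hookrightarrow\hookrightarrow H_{0}$ the argument is then immediate. By the preceding remarks, $D_{A_{\red}^{\#}}=\dom(A^{\#})\cap\ker(A)^{\bot}$ is a linear subspace of $D_{A_{\red}}=\dom(A)\cap\ker(A)^{\bot}$ on which the graph norms agree, so the inclusion $D_{A_{\red}^{\#}}\hookrightarrow D_{A_{\red}}$ is isometric. Composing it with the compact embedding $D_{A_{\red}}\hookrightarrow\hookrightarrow H_{0}$ shows that bounded subsets of $D_{A_{\red}^{\#}}$ are relatively compact in $H_{0}$, which is the claim.

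Under the hypothesis $D_{(A_{0})_{\red}}\hookrightarrow\hookrightarrow H_{0}$ I would instead work through the reduced operator. By \prettyref{prop:reduced_op}(b) the hypothesis means that $(A_{0})_{\red}$ is compactly invertible, in particular boundedly invertible, so \prettyref{prop:reduced_op}(a) gives that $\ran(A_{0})$ is closed. Then \prettyref{prop:closed_range_via_A} (together with the inclusions $\ran(A_{0})\subseteq\ran(A^{\#})\subseteq\overline{\ran}(A_{0})$ used in its proof) yields that $\ran(A^{\#})=\ran(A_{0})$ is closed, whence $(A^{\#})_{\red}$ is boundedly invertible. To upgrade this to compact invertibility, I would establish the operator identity
\[
(A^{\#})_{\red}^{-1}=P\,(A_{0})_{\red}^{-1},
\]
where $P\colon H_{0}\to H_{0}$ is the orthogonal projection onto $\ker(A)^{\bot}=\ker(A^{\#})^{\bot}$. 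Indeed, for $y\in\ran(A^{\#})=\ran(A_{0})$ the element $x_{0}\coloneqq(A_{0})_{\red}^{-1}y$ satisfies $A^{\#}x_{0}=A_{0}x_{0}=y$, and the unique solution of $A^{\#}x=y$ lying in $\ker(A)^{\bot}$ is obtained by removing the $\ker(A)$-component of $x_{0}$, i.e. $x=Px_{0}$. Since $(A_{0})_{\red}^{-1}$ is compact and $P$ is bounded, the right-hand side is compact, so $(A^{\#})_{\red}$ is compactly invertible and \prettyref{prop:reduced_op}(b) gives $D_{A_{\red}^{\#}}\hookrightarrow\hookrightarrow H_{0}$.

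The routine case is the second hypothesis; the main obstacle lies in the first, where the kernels $\ker(A_{0})$ and $\ker(A)=\ker(A^{\#})$ need not coincide. Because of this, $(A_{0})_{\red}^{-1}$ maps into $\ker(A_{0})^{\bot}$ whereas $(A^{\#})_{\red}^{-1}$ must map into $\ker(A)^{\bot}$, so the two reduced inverses are not literally equal. Establishing the closedness of $\ran(A^{\#})$ first and then correcting by the projection $P$ is exactly what repairs this mismatch, and it is the only genuinely non-formal step in the argument.
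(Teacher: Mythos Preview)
Your proof is correct. For the second hypothesis, $D_{A_{\red}}\hookrightarrow\hookrightarrow H_{0}$, your argument coincides with the paper's: both observe $\ker(A^{\#})=\ker(A)$ and conclude that $D_{A_{\red}^{\#}}$ sits isometrically in $D_{A_{\red}}$.

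For the first hypothesis, $D_{(A_{0})_{\red}}\hookrightarrow\hookrightarrow H_{0}$, your route is genuinely different from the paper's. You work directly on the $A$-side: after noting $\ran(A^{\#})=\ran(A_{0})$, you derive the explicit formula $(A^{\#})_{\red}^{-1}=P\,(A_{0})_{\red}^{-1}$ and read off compactness from that of $(A_{0})_{\red}^{-1}$. The paper instead passes through the dual side: by \prettyref{cor:closed_adjoint}(b) one has $D_{B_{\red}}\hookrightarrow\hookrightarrow H_{1}$, so the already-established second case applied to $B$ gives $D_{B_{\red}^{\#}}\hookrightarrow\hookrightarrow H_{1}$; then, using $\ran(A^{\#})$ closed and \prettyref{thm:adjont_periodic} to obtain $A^{\#}=(-B^{\#})^{\ast}$, another application of \prettyref{cor:closed_adjoint}(b) transfers the compact embedding back to $A^{\#}$. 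Your argument is more self-contained in that it avoids invoking \prettyref{thm:adjont_periodic}; the paper's argument, on the other hand, highlights the symmetry between $A^{\#}$ and $B^{\#}$ and reuses the second case rather than building a new inverse formula.
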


\begin{proof}
Assume first that $D_{A_{\red}}\hookrightarrow\hookrightarrow H_{0}.$
Since $\ker(A^{\#})=\ker(A),$ we infer that $D_{A_{\red}^{\#}}$
is a closed subspace of $D_{A_{\red}}$ and hence, it is also compactly
embedded into $H_{0}.$\\
Assume now that $D_{\left(A_{0}\right)_{\red}}\hookrightarrow\hookrightarrow H_{0}$.
By \prettyref{cor:closed_adjoint} (b) we know that $D_{B_{\red}}\hookrightarrow\hookrightarrow H_{1}$
and hence, $D_{B_{\red}^{\#}}\hookrightarrow\hookrightarrow H_{1}$
by the first part of the proof. However, since $\ran(A_{0})$ is closed
by \prettyref{prop:reduced_op} (b), we infer that $\ran(A^{\#})$
is closed by \prettyref{prop:closed_range_via_A} and thus, $A^{\#}=(-B{}^{\#})^{\ast}$
according to \prettyref{thm:adjont_periodic}. Employing \prettyref{cor:closed_adjoint}
(b) again, we infer that $D_{A_{\red}^{\#}}\hookrightarrow\hookrightarrow H_{0}.$ 
\end{proof}
For applications it will be useful to study the case, where the operator
$A_{0}$ can be extended by an operator $\mathcal{A}$ on a larger
space, which has closed range. 
\begin{prop}
\label{prop:closed_range_via_extension}Let $\mathcal{A}:\dom\left(\mathcal{A}\right)\subseteq X\to Y$
be a densely defined closed linear operator between two Hilbert spaces
$X,Y$. Moreover, assume that $\ran(\mathcal{A})$ is closed and there
exist isometries $\iota_{X}:H_{0}\to X$, $\iota_{Y}:H_{1}\to Y$
such that 
\[
\iota_{Y}A_{0}\subseteq\mathcal{A}\iota_{X}\text{ and }\iota_{X}B_{0}\subseteq-\mathcal{A}^{\ast}\iota_{Y}.
\]
 Then, $A^{\#}$ has closed range. Moreover, if $D_{\mathcal{A}_{\red}}\hookrightarrow\hookrightarrow X,$
then $D_{A_{\red}^{\#}}\hookrightarrow\hookrightarrow H_{0}.$
\end{prop}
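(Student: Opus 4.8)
The plan is to treat both assertions by a single explicit construction that, for a prescribed right-hand side, produces a preimage under $A^{\#}$ together with a formula for the inverse of the reduced operator. I would first record the elementary identity
\[
\ran(A^{\#})=\ran(A)\cap\overline{\ran}(A_{0}),
\]
which is immediate from the definition of $\dom(A^{\#})$. Since one always has $\ran(A_{0})\subseteq\ran(A^{\#})\subseteq\overline{\ran}(A_{0})$, the closedness of $\ran(A^{\#})$ follows as soon as one proves $\overline{\ran}(A_{0})\subseteq\ran(A^{\#})$, for this forces $\ran(A^{\#})=\overline{\ran}(A_{0})$.

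For the construction, fix $y\in\overline{\ran}(A_{0})$. The first hypothesis $\iota_{Y}A_{0}\subseteq\mathcal{A}\iota_{X}$ gives $\iota_{Y}\ran(A_{0})\subseteq\ran(\mathcal{A})$, and as $\iota_{Y}$ is continuous and $\ran(\mathcal{A})$ is closed, also $\iota_{Y}\,\overline{\ran}(A_{0})\subseteq\ran(\mathcal{A})$. Hence I can write $\iota_{Y}y=\mathcal{A}\xi$ with $\xi\in\ker(\mathcal{A})^{\bot}$; by \prettyref{prop:reduced_op}(a) this is $\xi=\mathcal{A}_{\red}^{-1}\iota_{Y}y$. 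The candidate preimage is $x\coloneqq\iota_{X}^{\ast}\xi$. To verify it, recall $A=-B_{0}^{\ast}$, so $x\in\dom(A)$ with $Ax=y$ is equivalent to $\langle x,B_{0}v\rangle_{H_{0}}=\langle -y,v\rangle_{H_{1}}$ for all $v\in\dom(B_{0})$. Here the second hypothesis $\iota_{X}B_{0}\subseteq-\mathcal{A}^{\ast}\iota_{Y}$ is exactly what is needed: for $v\in\dom(B_{0})$ it yields $\iota_{Y}v\in\dom(\mathcal{A}^{\ast})$ and $\mathcal{A}^{\ast}\iota_{Y}v=-\iota_{X}B_{0}v$, and combining this with $\xi\in\dom(\mathcal{A})$ and the isometry properties gives
\[
\langle x,B_{0}v\rangle_{H_{0}}=\langle\xi,\iota_{X}B_{0}v\rangle_{X}=-\langle\xi,\mathcal{A}^{\ast}\iota_{Y}v\rangle_{X}=-\langle\mathcal{A}\xi,\iota_{Y}v\rangle_{Y}=-\langle y,v\rangle_{H_{1}}.
\]
Thus $x\in\dom(A)$ and $Ax=y$; since $y\in\overline{\ran}(A_{0})$ this places $x\in\dom(A^{\#})$, so $y\in\ran(A^{\#})$ and the closed-range claim follows.

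For the compact embedding I would reuse the same construction. Projecting $x$ onto $\ker(A)^{\bot}=\ker(A^{\#})^{\bot}$ (which leaves $Ax$ unchanged, as $\ker(A)\subseteq\dom(A^{\#})$) yields the representation
\[
(A_{\red}^{\#})^{-1}=P\,\iota_{X}^{\ast}\,\mathcal{A}_{\red}^{-1}\,\iota_{Y}\quad\text{on }\ran(A^{\#})=\overline{\ran}(A_{0}),
\]
where $P$ is the orthogonal projection onto $\ker(A)^{\bot}$. If $D_{\mathcal{A}_{\red}}\hookrightarrow\hookrightarrow X$, then $\mathcal{A}_{\red}^{-1}$ is compact by \prettyref{prop:reduced_op}(b); being pre- and post-composed with bounded operators, $(A_{\red}^{\#})^{-1}$ is then compact as well, and \prettyref{prop:reduced_op}(b) gives $D_{A_{\red}^{\#}}\hookrightarrow\hookrightarrow H_{0}$.

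The main obstacle is that the hypotheses only control $A_{0}$ and $B_{0}$ on their own domains, so for a general $x\in\dom(A^{\#})$ one cannot assert $\iota_{X}x\in\dom(\mathcal{A})$ and simply import an estimate on the ``domain side''. The resolution is to lift the datum $y$ rather than the argument $x$: one solves $\mathcal{A}\xi=\iota_{Y}y$ in the larger space $X$ and then checks membership $\iota_{X}^{\ast}\xi\in\dom(A)=\dom(B_{0}^{\ast})$ \emph{weakly}, by testing against $\dom(B_{0})$, which is precisely the regime covered by $\iota_{X}B_{0}\subseteq-\mathcal{A}^{\ast}\iota_{Y}$. Getting this weak identification right -- that $\iota_{X}^{\ast}\xi$ indeed lands in $\dom(B_{0}^{\ast})$ with image $-y$ -- is the crux, and once it is in place both assertions drop out of the displayed formulas.
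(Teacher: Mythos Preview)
Your proof is correct and follows essentially the same route as the paper: lift $y\in\overline{\ran}(A_{0})$ via $\iota_{Y}$ into $\ran(\mathcal{A})$, solve $\mathcal{A}\xi=\iota_{Y}y$, and verify weakly that $\iota_{X}^{\ast}\xi\in\dom(A)=\dom(B_{0}^{\ast})$ using the second inclusion $\iota_{X}B_{0}\subseteq-\mathcal{A}^{\ast}\iota_{Y}$, arriving at the same explicit formula $(A_{\red}^{\#})^{-1}=P\,\iota_{X}^{\ast}\,\mathcal{A}_{\red}^{-1}\,\iota_{Y}$. Your only (cosmetic) deviation is that you select $\xi\in\ker(\mathcal{A})^{\bot}$ from the outset and argue $\iota_{Y}\,\overline{\ran}(A_{0})\subseteq\ran(\mathcal{A})$ directly from closedness rather than via an approximating sequence; both choices slightly streamline the passage to the compact-inverse formula but do not change the argument.
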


\begin{proof}
We need to show that $\ran(A^{\#})=\overline{\ran}(A_{0}).$ So let
$f\in\overline{\ran}(A_{0})$. Then there exists a sequence $(\phi_{n})_{n}$
in $\dom(A_{0})$ such that $A_{0}\phi_{n}\to f.$ Consequently, $\iota_{Y}A_{0}\phi_{n}\to\iota_{Y}f$
and thus, by assumption $\mathcal{A}\iota_{X}\phi_{n}\to\iota_{Y}f.$
Since $\mathcal{A}$ has closed range, we infer
\begin{equation}
\exists v\in\dom(\mathcal{A}):\:\mathcal{A}v=\iota_{Y}f.\label{eq:f_in_A}
\end{equation}
Next, we show that $\iota_{X}^{\ast}v\in\dom(A).$ For this, let $\psi\in\dom(A^{\ast})=\dom(B_{0})$
and compute 
\begin{align*}
\langle-B_{0}\psi,\iota_{X}^{\ast}v\rangle_{H_{0}} & =-\langle\iota_{X}B_{0}\psi,v\rangle_{X}\\
 & =\langle\mathcal{A}^{\ast}\iota_{Y}\psi,v\rangle_{X}\\
 & =\langle\iota_{Y}\psi,\mathcal{A}v\rangle_{Y}\\
 & =\langle\psi,\iota_{Y}^{\ast}\mathcal{A}v\rangle_{H_{1}}\\
 & =\langle\psi,f\rangle_{H_{1}}.
\end{align*}
Hence, indeed $\iota_{X}^{\ast}v\in\dom(A)$ and $A\iota_{X}^{\ast}v=f\in\overline{\ran}(A_{0}).$
Thus, we have 
\begin{equation}
\iota_{X}^{\ast}v\in\dom(A^{\#})\text{ with }A^{\#}\iota_{X}^{\ast}v=f.\label{eq:closednedd_A}
\end{equation}

Assume now that $D_{\mathcal{A}_{\red}}\hookrightarrow\hookrightarrow X$.
Then $\mathcal{A}_{\red}$ is compactly invertible by \prettyref{prop:reduced_op}.
For $f\in\ran(A^{\#})$ we have by \prettyref{eq:f_in_A} and \prettyref{eq:closednedd_A}
$A^{\#}v=f$ for $v\coloneqq\iota_{X}^{\ast}\mathcal{A}_{\red}^{-1}\iota_{Y}f.$
Hence, 
\[
\left(A_{\red}^{\#}\right)^{-1}=P\iota_{X}^{\ast}\mathcal{A}_{\red}^{-1}\iota_{Y},
\]
where $P$ denotes the orthogonal projector onto $\ker(A)^{\bot}=\ker(A^{\#})^{\bot}.$
Since $\mathcal{A}_{\red}^{-1}$ is compact, so is $(A_{\red}^{\#})^{-1}$
and thus $D_{A_{\red}^{\#}}\hookrightarrow\hookrightarrow H_{0}$
by \prettyref{prop:reduced_op}. 
\end{proof}
\begin{rem}
It is remarkable that the latter theorem just yields the closedness
of $\ran(A^{\#})$ and neither of $\ran(A_{0})$ nor of $\ran(A).$
Indeed, the arguments used in the proof suggest that the closedness
of the range can only be achieved for the operator $A^{\#}$ and not
for any other extension of $A_{0}$ by such an extension technique.
However, we were not able to construct a concrete example for an operator
$A$, such that \prettyref{prop:closed_range_via_extension} is applicable
but $\ran(A_{0})$ and $\ran(A)$ are not closed. 
\end{rem}

\section{Applications\label{sec:Applications}}

Besides the already mentioned application to the transport equation
with periodic boundary condition, we will consider Maxwell's equations
as well as the heat and wave equation with abstract periodic boundary
conditions.

\subsection{The operators $\grad^{\#}$ and $\dive^{\#}$ }

Let $\Omega\subseteq\R^{n}$ be open. As in \prettyref{rem:grad_div}
we define the operators 
\[
\grad_{c}\colon C_{c}^{\infty}(\Omega)\subseteq L_{2}(\Omega)\to L_{2}(\Omega)^{n},\quad\phi\mapsto(\partial_{j}\phi)_{j\in\{1,\ldots,n\}}
\]
and 
\[
\dive_{c}\colon C_{c}^{\infty}(\Omega)^{n}\subseteq L_{2}(\Omega)^{n}\to L_{2}(\Omega),\quad\Psi\mapsto\sum_{j=1}^{n}\partial_{j}\Psi_{j}.
\]
Clearly, both operators are densely defined and linear and by integration
by parts we infer $\grad_{c}\subseteq-\dive_{c}.$ Hence, we are in
the framework of \prettyref{sec:Abstract-Periodicity} with $A_{c}\coloneqq\grad_{c}$
and $B_{c}\coloneqq\dive_{c}.$ As in the abstract setting, we define
\[
\grad_{0}\coloneqq\overline{\grad_{c}},\:\dive_{0}\coloneqq\overline{\dive_{c}},\quad\grad\coloneqq-\dive_{0}^{\ast},\;\dive\coloneqq-\grad_{0}^{\ast}.
\]
The domains are then given by 
\[
\dom(\grad)=H^{1}(\Omega),\quad\dom(\dive)=\{\Psi\in L_{2}(\Omega)^{n}\,;\,\sum_{j=1}^{n}\partial_{j}\Psi_{j}\in L_{2}(\Omega)\}
\]
and likewise for $\grad_{0},\dive_{0},$ where the classical boundary
conditions $u=0$ on $\partial\Omega$ and $\Psi\cdot n=0$ on $\partial\Omega$
are implemented in a generalised sense (if $\partial\Omega$ is smooth
enough, one can make sense of these boundary traces, see e.g. \cite[Section 2.4]{Necas}).
Now consider the operators $\grad^{\#}$ and $\dive^{\#}$ with the
domains 
\begin{align*}
\dom(\grad^{\#}) & =\{u\in\dom(\grad)\,;\,\grad u\in\overline{\ran}(\grad_{0})\},\\
\dom(\dive^{\#}) & =\{\Psi\in\dom(\dive)\,;\,\dive\Psi\in\overline{\ran}(\dive_{0})\}.
\end{align*}

\begin{thm}
Assume that $\Omega$ is bounded in one dimension; that is, there
exists a vector $x\in\R^{n}$ with $\|x\|=1$ and a number $m>0$
such that 
\[
\Omega\subseteq\{y\in\R^{n}\,;\,|\langle y,x\rangle|\leq m\}.
\]
Then $\ran(\grad^{\#})$ is closed and $\grad^{\#}=-\left(\dive^{\#}\right)^{\ast}.$
Moreover, if $\Omega$ is bounded, then $D_{\grad_{\red}^{\#}}\hookrightarrow\hookrightarrow L_{2}(\Omega)$
and $D_{\dive_{\red}^{\#}}\hookrightarrow\hookrightarrow L_{2}(\Omega)^{n}$. 
\end{thm}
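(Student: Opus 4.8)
The plan is to apply \prettyref{prop:closed_range_via_extension} with a suitably chosen extension operator $\mathcal{A}$ living on a slab containing $\Omega$. After a rotation I may assume $x=e_{1}$, so that $\Omega\subseteq S\coloneqq\{y\in\R^{n}\,;\,|y_{1}|<m\}$. I would take $X\coloneqq L_{2}(S)$, $Y\coloneqq L_{2}(S)^{n}$ and let $\mathcal{A}$ be the Dirichlet gradient on the slab, i.e. the closure of $\grad$ on $C_{c}^{\infty}(S)$, whose adjoint is $-\dive^{S}$, the distributional divergence on $S$ with maximal domain. The crucial point is that $\mathcal{A}$ has closed range: since $S$ is bounded in the $e_{1}$-direction, a one-dimensional integration along this direction together with the Cauchy--Schwarz inequality yields the Poincaré estimate $\|\phi\|_{L_{2}(S)}\leq 2m\,\|\grad\phi\|_{L_{2}(S)^{n}}$ for all $\phi\in C_{c}^{\infty}(S)$, which extends to $\dom(\mathcal{A})$ by density. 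Hence $\mathcal{A}$ is injective and bounded below, so $\ran(\mathcal{A})$ is closed.

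Next I would set up the isometries. Let $\iota_{X}\colon L_{2}(\Omega)\to L_{2}(S)$ and $\iota_{Y}\colon L_{2}(\Omega)^{n}\to L_{2}(S)^{n}$ be extension by zero; these are isometries since $\Omega\subseteq S$. The two intertwining inclusions required by \prettyref{prop:closed_range_via_extension} are first verified on cores: for $\phi\in C_{c}^{\infty}(\Omega)$ the extension $\iota_{X}\phi$ lies in $C_{c}^{\infty}(S)$ with $\grad(\iota_{X}\phi)=\iota_{Y}(\grad\phi)$, giving $\iota_{Y}\grad_{0}\subseteq\mathcal{A}\iota_{X}$ on $C_{c}^{\infty}(\Omega)$; similarly, for $\Psi\in C_{c}^{\infty}(\Omega)^{n}$ one has $\iota_{X}\dive_{0}\subseteq\dive^{S}\iota_{Y}=-\mathcal{A}^{\ast}\iota_{Y}$. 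Since $C_{c}^{\infty}(\Omega)$ is a core for $\grad_{0}$ and $C_{c}^{\infty}(\Omega)^{n}$ a core for $\dive_{0}$, and since $\iota_{X},\iota_{Y}$ are bounded while $\mathcal{A},\mathcal{A}^{\ast}$ are closed, both inclusions extend to the full closures $\grad_{0}$ and $\dive_{0}$. With the hypotheses of \prettyref{prop:closed_range_via_extension} in place, I conclude that $\ran(\grad^{\#})$ is closed, and then \prettyref{thm:adjont_periodic} gives $\grad^{\#}=-(\dive^{\#})^{\ast}$.

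For the compact-embedding statement I would simply replace the slab by a bounded box. If $\Omega$ is bounded, choose a cube $Q\supseteq\Omega$ and run exactly the same construction with $S$ replaced by $Q$; the Poincaré estimate still holds, so $\mathcal{A}=\grad_{0}^{Q}$ again has closed range, and now additionally $\dom(\mathcal{A})=H_{0}^{1}(Q)\hookrightarrow\hookrightarrow L_{2}(Q)$ by the Rellich--Kondrachov theorem, i.e. $D_{\mathcal{A}_{\red}}\hookrightarrow\hookrightarrow X$ since $\mathcal{A}$ is injective. The \emph{moreover} part of \prettyref{prop:closed_range_via_extension} then yields $D_{\grad_{\red}^{\#}}\hookrightarrow\hookrightarrow L_{2}(\Omega)$. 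Finally, since the identity $\grad^{\#}=-(\dive^{\#})^{\ast}$ entails $(\grad^{\#})^{\ast}=-\dive^{\#}$, \prettyref{cor:closed_adjoint}~(b) transfers the compactness to the adjoint and gives $D_{\dive_{\red}^{\#}}\hookrightarrow\hookrightarrow L_{2}(\Omega)^{n}$.

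The main obstacle I anticipate is the careful verification of the two intertwining inclusions: one must check that extension by zero maps the smooth compactly supported core of $\grad_{0}$ (respectively $\dive_{0}$) into the corresponding core on $S$ and commutes with the differential operator there; once this is secured on cores, boundedness of $\iota_{X},\iota_{Y}$ and closedness of $\mathcal{A},\mathcal{A}^{\ast}$ do the rest. The only genuinely analytic input is the slab Poincaré inequality, which is elementary precisely because boundedness in a single direction is assumed, and this is exactly the place where the one-dimensional boundedness hypothesis is used.
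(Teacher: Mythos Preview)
Your argument via \prettyref{prop:closed_range_via_extension} is correct, but it is a detour compared to the paper's proof. The paper observes that the Poincar\'e inequality already holds for $\grad_{0}$ \emph{on $\Omega$ itself} whenever $\Omega$ is bounded in one direction (no boundary regularity is needed for the $H_{0}^{1}$-version), so $\ran(\grad_{0})$ is closed and \prettyref{prop:closed_range_via_A} immediately gives the closedness of $\ran(\grad^{\#})$; likewise, for bounded $\Omega$ the Rellich embedding $H_{0}^{1}(\Omega)\hookrightarrow\hookrightarrow L_{2}(\Omega)$ holds without regularity assumptions, and \prettyref{prop:compact_via_A} together with \prettyref{cor:closed_adjoint}(b) yields both compact embeddings. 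Your approach instead transplants the problem to a slab or cube, proves the same Poincar\'e/Rellich estimates there, and pulls back via the extension machinery of \prettyref{prop:closed_range_via_extension}. This is logically sound but redundant here: the extension technique is designed for situations---like $\curl^{\#}$ in \prettyref{thm:Filonov}---where the minimal operator $A_{0}$ on $\Omega$ does \emph{not} obviously have closed range without boundary regularity, whereas $\grad_{0}$ already enjoys this property on arbitrary $\Omega$. What your route buys is a uniform template (the same proposition handles both $\grad^{\#}$ and $\curl^{\#}$); what the paper's route buys is brevity and a clearer indication of where the genuinely new ingredient (\prettyref{prop:closed_range_via_extension}) is actually indispensable.
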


\begin{proof}
Since $\Omega$ is bounded in one dimension, we find a constant $c>0$
such that 
\[
\|u\|_{L_{2}(\Omega)}\leq c\|\grad_{0}u\|_{L_{2}(\Omega)^{n}}\quad(u\in\dom(\grad_{0})).
\]
This is Poincare's inequality (see e.g. \cite[Proposition 11.3.1]{STW2022}).
Hence, $\grad_{0}$ has a closed range and thus, $\ran(\grad^{\#})$
is closed by \prettyref{prop:closed_range_via_A}. Thus, $\grad^{\#}=-\left(\dive^{\#}\right)^{\ast}$
follows from \prettyref{thm:adjont_periodic}. \\
Moreover, if $\Omega$ is bounded, we have $D_{\grad_{0}}\hookrightarrow\hookrightarrow L_{2}(\Omega)$
by Rellich's selection theorem (see e.g. \cite[Chapter 5.7]{Evans}
or \cite[Theorem 14.2.5]{STW2022}) and thus, the compactness of $D_{\grad_{\red}^{\#}}\hookrightarrow\hookrightarrow L_{2}(\Omega)$
and $D_{\dive_{\red}^{\#}}\hookrightarrow\hookrightarrow L_{2}(\Omega)^{n}$
follows from \prettyref{cor:closed_adjoint} (b) and \prettyref{prop:compact_via_A}.
\end{proof}
It is remarkable that the closedness of $\ran(\grad^{\#})$ and the
compact embedding does not depend on the regularity of the boundary
of $\Omega,$ which is needed for analogous results for $\grad$.
The above result can be used to study diffusion or wave phenomena
with abstract periodic boundary conditions. For instance, the heat
or the wave equation 
\[
\partial_{t}u-\dive^{\#}\grad^{\#}u=f,
\]
\[
\partial_{t}^{2}u-\dive^{\#}\grad^{\#}u=f,
\]
are well-posed, due to the selfadjointness of the operator $\dive^{\#}\grad^{\#}.$
Moreover, if $\Omega$ is bounded, the solution can be computed explicitly
by using eigenvalue expansions for $\dive^{\#}\grad^{\#}.$

\subsection{The operator $\curl^{\#}$}

Let $\Omega\subseteq\R^{3}$ open. Similar to $\grad$ and $\dive$
we define the operator
\[
\curl_{c}\colon C_{c}^{\infty}(\Omega)^{3}\subseteq L_{2}(\Omega)^{3}\to L_{2}(\Omega)^{3},\,\Psi\mapsto(\partial_{2}\Psi_{3}-\partial_{3}\Psi_{2},\partial_{3}\Psi_{1}-\partial_{1}\Psi_{3},\partial_{1}\Psi_{2}-\partial_{2}\Psi_{1}).
\]
Again, this is clearly a densely defined linear operator and integration
by parts gives $\curl_{c}\subseteq\curl_{c}^{\ast}.$ Hence, we are
in the framework of \prettyref{sec:Abstract-Periodicity} with $A_{c}=-B_{c}=\curl_{c}.$
Thus, we may define 
\[
\curl_{0}\coloneqq\overline{\curl_{c}},\,\curl\coloneqq\curl_{0}^{\ast}
\]
where 
\[
\dom(\curl)\coloneqq\{\Psi\in L_{2}(\Omega)^{3}\,;\,\curl\Psi\in L_{2}(\Omega)^{3}\}
\]
and the elements in $\dom(\curl_{0})$ satisfy a generalised electric
boundary condition $n\times\Psi=0$ on $\partial\Omega$ (see \cite{Costabel}
for trace spaces associated with $\curl$). We define the operator
$\curl^{\#}$ by 
\[
\dom(\curl^{\#})\coloneqq\{\Psi\in\dom(\curl)\,;\,\curl\Psi\in\overline{\ran}(\curl_{0})\}.
\]

\begin{thm}
\label{thm:Filonov}Assume that $\Omega$ is bounded. Then $D_{\curl_{\red}^{\#}}\hookrightarrow\hookrightarrow L_{2}(\Omega)^{3}$
and in particular $\curl^{\#}$ is selfadjoint and $\sigma(\curl^{\#})$
is discrete.
\end{thm}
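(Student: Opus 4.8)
The plan is to realise the abstract extension mechanism of \prettyref{prop:closed_range_via_extension} with a \emph{periodic curl on a torus} as the extending operator, so that the compact embedding $D_{\curl^{\#}_{\red}}\hookrightarrow\hookrightarrow L_2(\Omega)^3$ falls out of that proposition, and then to read off selfadjointness and discreteness of the spectrum from the general theory of the previous section. Thematically this is the natural choice: the operator with closed range that "sees through" the irregular boundary of $\Omega$ is exactly the curl with abstract periodicity on a surrounding box.

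First I would fix the geometry. Since $\Omega$ is bounded, after a translation and dilation there is $L>0$ with $\overline{\Omega}\subseteq\,]0,L[^{3}$, and I identify $]0,L[^{3}$ with the flat torus $\mathbb{T}\coloneqq\R^{3}/(L\mathbb{Z})^{3}$, setting $X=Y\coloneqq L_{2}(\mathbb{T})^{3}$. Extension by zero yields an isometry $\iota\coloneqq\iota_{X}=\iota_{Y}\colon L_{2}(\Omega)^{3}\to L_{2}(\mathbb{T})^{3}$, and for $\mathcal{A}$ I would take the curl on $\mathbb{T}$, i.e. the closure of $\curl$ acting on smooth $\mathbb{T}$-periodic vector fields. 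Via Fourier series I would record the facts I need about $\mathcal{A}$: it is selfadjoint, $\ker(\mathcal{A})$ consists of gradient and constant fields, and on $\ker(\mathcal{A})^{\bot}$ one has the regularity estimate $\|u\|_{H^{1}(\mathbb{T})}\lesssim\|u\|_{L_{2}(\mathbb{T})}+\|\mathcal{A}u\|_{L_{2}(\mathbb{T})}$; equivalently the nonzero eigenvalues of $\mathcal{A}$ are $\pm|k|$ with $k\in(2\pi/L)\mathbb{Z}^{3}\setminus\{0\}$, each of finite multiplicity and accumulating only at infinity. Together with Rellich's theorem the estimate gives $D_{\mathcal{A}_{\red}}\hookrightarrow\hookrightarrow L_{2}(\mathbb{T})^{3}$, whence by \prettyref{prop:reduced_op} the operator $\mathcal{A}_{\red}$ is compactly, in particular boundedly, invertible, so that $\ran(\mathcal{A})$ is closed.

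Next I would verify the two operator inclusions. For $\Psi\in\dom(\curl_{0})$ I pick $\phi_{n}\in C_{c}^{\infty}(\Omega)^{3}$ with $\phi_{n}\to\Psi$ and $\curl\phi_{n}\to\curl_{0}\Psi$ in $L_{2}(\Omega)^{3}$; extension by zero keeps these fields smooth and compactly supported inside the fundamental domain, so $\curl(\iota\phi_{n})=\iota(\curl\phi_{n})$, and passing to the limit gives $\iota\Psi\in\dom(\mathcal{A})$ with $\mathcal{A}\iota\Psi=\iota\curl_{0}\Psi$, i.e. $\iota A_{0}\subseteq\mathcal{A}\iota$. Since here $B_{0}=-\curl_{0}$ and $\mathcal{A}^{\ast}=\mathcal{A}$, the identical computation yields $\iota B_{0}\subseteq-\mathcal{A}^{\ast}\iota$. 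Now all hypotheses of \prettyref{prop:closed_range_via_extension} are in place, and its second part delivers $D_{\curl^{\#}_{\red}}\hookrightarrow\hookrightarrow L_{2}(\Omega)^{3}$, which is the first assertion.

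Finally I would harvest the consequences. The compact embedding gives, via \prettyref{prop:reduced_op}, that $\curl^{\#}_{\red}$ is compactly invertible, so in particular $\ran(\curl^{\#})$ is closed and \prettyref{thm:adjont_periodic} applies; noting that in the present normalisation $B^{\#}=-\curl^{\#}$, statement (i) there reads $(\curl^{\#})^{\ast}=\curl^{\#}$, i.e. $\curl^{\#}$ is selfadjoint. Then $\curl^{\#}_{\red}$ is selfadjoint by \prettyref{prop:adjoint_reduced} and has compact resolvent, hence a spectrum consisting of isolated finite-multiplicity eigenvalues accumulating only at infinity; by \prettyref{prop:spectrum_reduced} (equivalently \prettyref{cor:isolated_value}) the value $0$ is isolated in $\sigma(\curl^{\#})$ as well, so $\sigma(\curl^{\#})$ is discrete. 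The step I expect to be the genuine obstacle is the torus regularity estimate on $\ker(\mathcal{A})^{\bot}$ (equivalently the discreteness of the torus-curl spectrum and the compact embedding $D_{\mathcal{A}_{\red}}\hookrightarrow\hookrightarrow L_{2}(\mathbb{T})^{3}$); everything else is bookkeeping on top of the abstract results already established.
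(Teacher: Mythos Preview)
Your proof is correct and follows the same overall scheme as the paper: both apply \prettyref{prop:closed_range_via_extension} by extending $\curl_0$ on $\Omega$ to a curl-type operator $\mathcal{A}$ on a larger domain where $D_{\mathcal{A}_{\red}}$ is compactly embedded, verify the two operator inclusions by approximating with $C_c^\infty(\Omega)^3$ (extension by zero), and then read off selfadjointness and discreteness from \prettyref{thm:adjont_periodic}, \prettyref{prop:reduced_op}, and \prettyref{prop:spectrum_reduced}.

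The genuine difference is in the choice of $\mathcal{A}$. The paper takes $\mathcal{A}=\curl_0$ on a ball $B(0,R)\supseteq\Omega$ and imports the compact embedding $D_{\mathcal{A}_{\red}}\hookrightarrow\hookrightarrow L_2(B(0,R))^3$ from the literature (Leis, Picard). You instead take the periodic curl on a torus $\mathbb{T}=\R^3/(L\mathbb{Z})^3$ with $\overline{\Omega}\subseteq\,]0,L[^3$, and establish the compact embedding yourself via Fourier series and Rellich. Your route is more self-contained---the torus regularity estimate you flag as the ``genuine obstacle'' is in fact an elementary Fourier computation, arguably easier than the ball result the paper cites---and it is thematically pleasing that the extending operator is itself a $\curl^\#$ in the sense of the paper. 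The paper's choice has the virtue of staying within the same family of operators ($\curl_0$ on a nicer domain) and keeping the argument short by citation. Either way, the verification of the inclusions and the harvesting of consequences are identical.
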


\begin{proof}
We want to apply \prettyref{prop:closed_range_via_extension}. For
this, let $R>0$ be such that $\Omega\subseteq B(0,R)$ and let $\mathcal{A}$
be the operator $\curl_{0}$ established on $L_{2}(B(0,R))^{3}.$
Then $D_{\mathcal{A}_{\red}}\hookrightarrow\hookrightarrow L_{2}(B(0,R))^{3}$
(see e.g. \cite{Leis1974} or \cite{Picard1984}). We denote by $\iota\colon L_{2}(\Omega)^{3}\to L_{2}(B(0,R))^{3}$
the canonical injection, that is 
\[
\left(\iota\Psi\right)(x)\coloneqq\begin{cases}
\Psi(x) & \text{ if }x\in\Omega,\\
0 & \text{ otherwise.}
\end{cases}
\]
By \prettyref{prop:closed_range_via_extension} it suffices to check
\[
\iota\curl_{0}\subseteq\mathcal{A}\iota,\quad\iota\curl_{0}\subseteq\mathcal{A}^{\ast}\iota.
\]
Since $\mathcal{A}$ is symmetric, it suffices to show the first inclusion.
So let $\Psi\in\dom(\curl_{0}).$ Then we find a sequence $(\Psi_{n})_{n\in\N}$
in $C_{c}^{\infty}(\Omega)^{3}$ with $\Psi_{n}\to\Psi$ and $\curl\Psi_{n}\to\curl_{0}\Psi$
in $L_{2}(\Omega)^{3}.$ Thus, $\iota\Psi_{n}\to\iota\Psi$ and $\curl\iota\Psi_{n}=\iota\curl\Psi_{n}\to\iota\curl_{0}\Psi$
in $L_{2}(B(0,R))^{3}.$ Since $\iota\Psi_{n}\in C_{c}^{\infty}(B(0,R))^{3}$
for each $n\in\N,$ we infer that $\iota\Psi\in\dom(\mathcal{A})$
and $\mathcal{A}\iota\Psi=\iota\curl_{0}\Psi,$ which shows the claim. 
\end{proof}
\begin{rem}
The latter theorem provides an abstract functional analytical proof
of a variant of the main result in \cite{Filonov1999}, where the
author shows the selfadjointness of $\curl^{\#}$ and the discreteness
of its spectrum by using a similar extension procedure and potential
theory. Moreover, the author provides a Weyl asymptotic of the eigenvalues.
It should be noted that in \cite{Filonov1999}, $\Omega$ is assumed
to be of finite measure and not necessarily bounded.
\end{rem}

\begin{rem}
We again emphasise that the compact embedding of $D_{\curl_{\red}^{\#}}$
does not require any regularity of the boundary of $\Omega$, while
for a corresponding result for $\curl$ such regularity assumptions
are needed (see e.g. \cite{Weck1974,PWW2001}).
\end{rem}

\end{document}